\newtheorem{theorem}{Theorem}[section]
\newtheorem{lemma}{Lemma}[section]
\newtheorem{remark}{Remark}[section]
\newtheorem{proposition}{Proposition}[section]
\numberwithin{equation}{section}
\newcommand{\be}{\begin{equation}}
\newcommand{\ee}{\end{equation}}
\newcommand\bes{\begin{eqnarray}} \newcommand\ees{\end{eqnarray}}
\newcommand{\bess}{\begin{eqnarray*}}
\newcommand{\eess}{\end{eqnarray*}}
\newcommand{\bbbb}{\left\{\begin{aligned}}
\newcommand{\nnnn}{\end{aligned}\right.}
\newcommand{\bea}{\begin{align*}}
\newcommand{\eea}{\end{align*}}
\newcommand\ep{\varepsilon}
\newcommand\kk{\left}
\newcommand\rr{\right}
\newcommand\dd{\displaystyle}
\newcommand\dx{{\rm d}x}
\newcommand\dy{{\rm d}y}
\newcommand\yy{\infty}
\newcommand\ud{\underline}
\begin{document}\thispagestyle{empty}
\setlength{\baselineskip}{16pt}

\begin{center}
 {\LARGE\bf Longtime behaviors of an epidemic model}\\[2mm]
 {\LARGE\bf with nonlocal diffusions and a free boundary: rate of accelerated spreading\footnote{This work was supported by NSFC Grants
 12301247, 12171120.}}\\[4mm]
{\Large Lei Li}\\[0.5mm]
{School of Mathematics and Statistics, Henan University of Technology}\\
{Zhengzhou, 450001, China}\\
{\Large Mingxin Wang}\footnote{Corresponding author. {\sl E-mail}:  mxwang@hpu.edu.cn}\\[1mm]
{\small School of Mathematics and Information Science, Henan Polytechnic University,}\\
{ Jiaozuo, 454003, China}
\end{center}

\date{\today}

\begin{quote}
\noindent{\bf Abstract.} This is the third part of our series of work devoted to the dynamics of an epidemic model with nonlocal diffusions and free boundary. This part is concerned with the rate of accelerated spreading for three types of kernel functions when spreading happens. By constructing the suitable upper and lower solutions, we get the rate of the accelerated spreading of free boundary, which is closely related to the behavior of kernel functions near infinity. Our results indicate that the heavier the tail of the kernel functions are, the faster the rate of accelerated spreading is. Moreover, more accurate spreading profiles for solution component $(u,v)$ are also obtained.

\textbf{Keywords}: Nonlocal diffusion; epidemic model; free boundary; accelerated spreading.

\textbf{AMS Subject Classification (2000)}: 35K57, 35R09,
35R20, 35R35, 92D25
\end{quote}

\section{Introduction}\pagestyle{myheadings}
\renewcommand{\thethm}{\Alph{thm}}
{\setlength\arraycolsep{2pt}
In this paper, we continue  investigating the dynamics of the following epidemic model
  \bes\left\{\begin{aligned}\label{1.1}
&u_t=d_1\int_0^{h(t)}\!J_1(x-y)u(t,y)\dy-d_1u-au+H(v), \hspace{2mm} t>0, ~ x\in[0,h(t)),\\
&v_t=d_2\int_0^{h(t)}\!J_2(x-y)v(t,y)\dy-d_2v-bv+G(u), \hspace{3mm} t>0, ~ x\in[0,h(t)),\\
&u(t,h(t))=v(t,h(t))=0, \hspace{53mm} t>0,\\
&h'(t)=\int_0^{h(t)}\!\int_{h(t)}^{\yy}\big[\mu_1 J_1(x-y)u(t,x)
+\mu_2 J_2(x-y)v(t,x)\big]\dy\dx, \;\; t>0,\\
&h(0)=h_0>0,~ u(0,x)=u_0(x), ~ v(0,x)=v_0(x),\; ~ x\in[0,h_0],\\
 \end{aligned}\right.
 \ees
 where all parameters are positive constants. The kernel functions $J_1$ and $J_2$ satisfy
 \begin{enumerate}
\item[{\bf(J)}]$J\in C(\mathbb{R})\cap L^{\yy}(\mathbb{R})$, $J(x)\ge0$, $J(0)>0$, $J$ is even, $\dd\int_{\mathbb{R}}J(x)\dx=1$.
 \end{enumerate}
The initial value functions $u_0$ and $v_0$ are in $C([0,h_0])$, positive in $[0,h_0)$ and vanish at $x=h_0$.
 The nonlinear reaction terms $H$ and $G$ satisfy
 \begin{enumerate}
\item[{\bf(H)}]\; $H,G\in C^2([0,\yy))$, $H(0)=G(0)=0$, $H'(z),G'(z)>0$ in $[0,\yy)$, $H''(z), G''(z)<0$ in $(0,\yy)$, and $G(H(\hat z)/a)<b\hat{z}$ for some $\hat{z}>0$.
 \end{enumerate}

In this model, $u(t,x)$ and $v(t,x)$ stand for the spatial
concentrations of the bacteria and the infective human population, respectively, at time $t$ and location $x$ in the one dimensional habitat; $-au$ represents the natural death rate of the bacterial population and $H(v)$ denotes the contribution of the infective human to the growth rate of the bacteria; $-bv$ is the fatality rate of the infective human population and $G(u)$ is the infection rate of human population; $d_1$ and $d_2$, respectively, stand for the diffusion rates of bacteria and infective human; the spatial movements of $u$ and $v$ are approximated by the nonlocal diffusions.

The corresponding Cauchy problem of \eqref{1.1} with local diffusions
\bes\label{1.2}
\left\{\!\begin{aligned}
&u_{t}=d_1 u_{xx}-au+H(v), & &t>0,~x\in\mathbb{R},\\
&v_{t}=d_2 v_{xx}-bv+G(u), & &t>0,~x\in\mathbb{R}
\end{aligned}\right.
 \ees
 was first proposed  by Hsu and Yang \cite{HY} to model the spread of an oral-fecal transmitted epidemic.
Let
  \bess
  \mathcal{R}_0=\frac{H'(0)G'(0)}{ab}.
 \eess
It was proved in \cite{HY} that when $\mathcal{R}_0>1$, there exists a $c_*>0$ such that if and only if $c\ge c_*$, \eqref{1.2} has a positive monotone traveling wave solution that is unique up to translation. Moreover, the dynamics of the corresponding ODE system with positive initial value is governed by $\mathcal{R}_0$. When $\mathcal{R}_0<1$, $(0,0)$ is globally asymptotically stable; while when $\mathcal{R}_0>1$, there exists a unique positive equilibrium $(u^*,v^*)$ which is uniquely given by
  \bes\label{1.3}
  au^*=H(v^*), ~ ~ bv^*=G(u^*),\ees
and is globally asymptotically stable.

For the explanation of model \eqref{1.1}, one can refer to \cite{AMRT,CDLL,LLW}.  We in  \cite{LL1} proved that this model is well-posed, and its longtime behaviors are governed by the following spreading-vanishing dichotomy.
\begin{enumerate}[$(1)$]
\item \underline{Spreading:} necessarily $\mathcal{R}_0>1$, $ h_\yy:=\lim_{t\to\yy}h(t)=\yy$,
\[\lim_{t\to\yy}(u(t,x),v(t,x))=(U(x),V(x)) ~ ~ {\rm ~  in ~ }[C_{\rm loc}([0,\yy))]^2,\]
 where $(U(x),V(x))$ is the unique bounded positive solution of
 \bes\label{1.4}
\begin{cases}
 \dd d_1\int_{0}^{\yy}J_1(x-y)U(y)\dy-d_1U-aU+H(V)=0, ~ ~ x\in[0,\yy),\\
\dd d_2\int_{0}^{\yy}J_2(x-y)V(y)\dy-d_2V-bV+G(U)=0, ~ ~  x\in[0,\yy).
\end{cases}
\ees
\item \underline{Vanishing:} $h_{\yy}<\yy$ and $\lim_{t\to\yy}\|u(t,\cdot)+v(t,\cdot)\|_{C([0,h(t)])}=0$.
\end{enumerate}

It can be seen from \cite[Proposition 2.2]{LL1} that the unique bounded positive solution $(U,V)$ of \eqref{1.4} is continuous and strictly increasing for $x\ge0$, and  $(U(x),V(x))\to(u^*,v^*)$ as $x\to\yy$.

Later on, we in \cite{LL2} investigated the spreading speed when spreading occurs. We found that the spreading speed is determined by the following so-called semi-wave problem
\bes\left\{\begin{aligned}\label{1.5}
&d_1\int_{-\yy}^0J_1(x-y)\phi(y)\dy-d_1\phi+c\phi'-a\phi+H(\psi)=0, & & x\in(-\yy,0),\\
&d_2\int_{-\yy}^0J_2(x-y)\psi(y)\dy-d_2\psi+c\psi'-b\psi+G(\phi)=0, & & x\in(-\yy,0),\\
&\phi(-\yy)=u^*, ~ ~ \psi(-\yy)=v^*, ~ ~\phi(0)=\psi(0)=0,\\
&c=\int_{-\yy}^0\int_0^{\yy}\bigg(\mu_1J_1(x-y)\phi(x)+\mu_2J_2(x-y)\psi(x)\bigg)\dy\dx.
 \end{aligned}\right.
 \ees
 A rather complete understanding of such semi-wave problem has been obtained by Du and Ni \cite{DN1}. They showed that \eqref{1.5} has a unique solution triplet $(c_0,\phi_{c_0},\psi_{c_0})$ with $c_0>0$ and $(\phi_{c_0},\psi_{c_0})$ strictly decreasing in $(-\yy,0]$ if and only if $J_1$ and $J_2$ satisfy the following condition
 \begin{enumerate}
\item[{\bf(J1)}] $\dd\int_0^{\yy}xJ_i(x)\dx<\yy$\, for $i=1,2$.
 \end{enumerate}

Using the solution of \eqref{1.5} and some suitable comparison arguments, in \cite{LL2} we derived the spreading speed of \eqref{1.1}. That is,  if {\bf (J1)} holds, then
\bess
    \begin{cases}\dd\lim_{t\to\yy}\frac{h(t)}{t}=c_0, \\
    \dd\lim_{t\to\yy}\max_{[0,ct]}\big(|u(t,x)-U(x)|+|v(t,x)-V(x)|\big)=0 \; ~ {\rm for ~ any ~ } c\in[0,c_0),
    \end{cases}
    \eess
    where $c_0$ is unique speed of semi-wave problem \eqref{1.5}, and $(U,V)$ is the unique bounded positive solution of \eqref{1.4}.
  If {\bf (J1)} does not hold, then
    \bess
    \begin{cases}\dd\lim_{t\to\yy}\frac{h(t)}{t}=\yy, \\
    \dd\lim_{t\to\yy}\max_{[0,ct]}\big(|u(t,x)-U(x)|+|v(t,x)-V(x)|\big)=0 \; ~ {\rm for ~ any ~ } c\in[0,\yy).
    \end{cases}
    \eess
    The case $\lim_{t\to\yy}h(t)/t=\yy$ is usually called {\it accelerated spreading}.

    In this paper, we are interested in the rate of accelerated spreading for three types of kernel functions. For clarity, we need first to introduce the following two notations:
\begin{enumerate}[$(1)$]
  \item $s(t)\sim l(t)$ means that there exist positive constants $\varsigma_1$ and $\varsigma_2$ such that
  \[\varsigma_1l(|t|)\le s(|t|)\le \varsigma_2 l(|t|)~ {\rm ~  for ~ all ~ }|t|\gg1.\]
  \item $s(t)=o(l(t))$ means that $s(t)/l(t)\to0$ as $t\to\yy$.
\end{enumerate}

The three types of kernel functions we care about are
\bes
&\dd J_i\sim \frac{\ln^{\alpha}\ln|x|}{|x|\ln^{\beta} |x|} ~ {\rm for ~  }i=1,2, ~ \alpha\in\mathbb{R},{\rm   ~ and ~ }\beta>1,\label{1.6}\\
& \dd J_i\sim \frac{\ln^{\beta}|x|}{|x|^{\gamma}} {\rm ~ for ~ }i=1,2, ~ \gamma\in(1,2) ~ {\rm and ~ }\beta\in\mathbb{R},\label{1.7}\\
&\dd J_i\sim \frac{\ln^{\beta}|x|}{|x|^{2}}{\rm ~ for ~ }i=1,2,{\rm ~ and ~ }\beta\ge-1 \label{1.8}.
\ees

Clearly, as $|x|\to\yy$, the decaying rates for these three types of kernel functions satisfy
\[\eqref{1.8}\succ\eqref{1.7}\succ\eqref{1.6}.\]

We have the following main result.
 \begin{theorem}\label{t1.1}Let $(u,v,h)$ be the unique solution of \eqref{1.1} and spreading happen. Then the following statements are valid.
 \begin{enumerate}[$(1)$]
   \item If \eqref{1.6} holds, then
    \bess
     \begin{cases}
     \dd\ln h(t)\sim t^{\frac{1}{\beta}}\ln^{\frac{\alpha}{\beta}}t,\\
   \dd\lim_{t\to\yy}\max_{x\in[0,s(t)]}\big(|u(t,x)-U(x)|+|v(t,x)-V(x)|\big)=0 {\rm ~ for ~ any ~ }s(t){\rm ~ with ~ }\ln s(t)=o(t^{\frac{1}{\beta}}\ln^{\frac{\alpha}{\beta}}t).
   \end{cases}
   \eess
   \item If \eqref{1.7} holds, then
   \bess
    \begin{cases}
    \dd h(t)\sim t^{\frac{1}{\gamma-1}}\ln^{\frac{\beta}{\gamma-1}}t,\\
   \dd \lim_{t\to\yy}\max_{x\in[0,s(t)]}\big(|u(t,x)-U(x)|+|v(t,x)-V(x)|\big)=0 {\rm ~ for ~ any ~ }s(t)=o(t^{\frac{1}{\gamma-1}}\ln^{\frac{\beta}{\gamma-1}}t).
   \end{cases}
   \eess
   \item If \eqref{1.8} holds with $\beta>-1$, then
   \bess
   \begin{cases}
  \dd h(t)\sim t\ln^{\beta+1} t,\\
   \dd \lim_{t\to\yy}\max_{x\in[0,s(t)]}\big(|u(t,x)-U(x)|+|v(t,x)-V(x)|\big)=0 {\rm ~ for ~ any ~ }s(t)=o(t\ln^{\beta+1} t).
   \end{cases}
   \eess
   \item If \eqref{1.8} holds with $\beta=-1$, then
   \bess
     \begin{cases}
     \dd h(t)\sim t\ln \ln t,\\
   \dd\lim_{t\to\yy}\max_{x\in[0,s(t)]}\big(|u(t,x)-U(x)|+|v(t,x)-V(x)|\big)=0 {\rm ~ for ~ any ~ }s(t)=o(t\ln \ln t).
   \end{cases}
   \eess
 \end{enumerate}
 \end{theorem}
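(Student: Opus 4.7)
The plan is to obtain matching upper and lower bounds for $h(t)$ by constructing suitable upper and lower solutions of \eqref{1.1}, and then to upgrade the locally uniform convergence of $(u,v)$ to $(U,V)$ (from \cite{LL1}) to uniformity on the expanding interval $[0,s(t)]$. The object that drives everything is the tail integral
\[
I(h):=\sum_{i=1,2}\mu_i\int_0^h\!\int_h^{\yy}J_i(x-y)\dy\dx=\sum_{i=1,2}\mu_i\int_0^h\!K_i(s)\,{\rm d}s,\qquad K_i(r):=\int_r^{\yy}J_i(z)\,{\rm d}z,
\]
which is exactly the right-hand side of the free-boundary equation once $u$ and $v$ are bounded above and below by constants; the four tail regimes of $J_i$ will reduce to four different asymptotic behaviours of $I(h)$ as $h\to\yy$.

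For the upper bound on $h(t)$, the global bound $u,v\le M$ from \cite{LL1} gives $h'(t)\le CI(h(t))$. A direct computation of $K_i$ from the prescribed decay of $J_i$, followed by integration by parts in $\int_0^h K_i$, yields
\[
I(h)\sim\begin{cases}h\,\dfrac{\ln^{\ap}\ln h}{\ln^{\beta-1}h},&\text{case \eqref{1.6}},\\[4pt] h^{2-\gamma}\ln^{\beta}h,&\text{case \eqref{1.7}},\\[4pt] \ln^{\beta+1}h,&\text{case \eqref{1.8} with }\beta>-1,\\[4pt] \ln\ln h,&\text{case \eqref{1.8} with }\beta=-1.\end{cases}
\]
Solving the separable ODE $\bar h'=I(\bar h)$ in each case then produces the upper bound. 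For example, in case \eqref{1.6} the substitution $y=\ln\bar h$ reduces the ODE to $y^{\beta-1}y'\sim\ln^{\ap}y$, which integrates to $y^{\beta}\sim\beta\ln^{\ap}y\cdot t$, giving $\ln\bar h\sim t^{1/\beta}\ln^{\ap/\beta}t$; the other three cases lead to analogous explicit separable ODEs whose leading-order solutions match Theorem \ref{t1.1}.

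For the matching lower bound, I would use the locally uniform convergence $(u,v)\to(U,V)$ together with $(U,V)\to(u^*,v^*)$ at $\yy$ to show that for any small $\ep>0$ there exist $L_\ep,T_\ep>0$ with
\[
u(t,x)\ge u^*-\ep,\qquad v(t,x)\ge v^*-\ep,\qquad t\ge T_\ep,\ x\in[0,h(t)-L_\ep].
\]
This ``interior plateau up to the front'' is the main technical input of the lower estimate, and can be secured by a bump sub-solution whose support tracks $h(t)$, combined with \eqref{1.4} and the strict monotonicity of $(U,V)$. Substituting this estimate into the formula for $h'(t)$ yields $h'(t)\ge C'I(h(t))$ with the same asymptotic order as before, so solving the same separable ODE delivers the reverse inequality on $h(t)$ in all four cases.

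The uniform-convergence statement is the step I expect to be the main obstacle, since $C_{\rm loc}$ convergence alone cannot give uniformity on a genuinely expanding $[0,s(t)]$. My plan is, for each small $\ep>0$, to construct a sub-solution of \eqref{1.1} on a moving domain $[0,\ud h(t)]$ whose growth is strictly slower than the spreading rate established above but strictly faster than $s(t)$, of the form
\[
\ud u(t,x)=(1-\ep)U(x),\qquad \ud v(t,x)=(1-\ep)V(x)\text{ on }[0,\ud h(t)],
\]
with a suitable mollification near $x=\ud h(t)$. The verification uses \eqref{1.4} and the fact that the truncation error $\int_{\ud h(t)}^{\yy}J_i(x-y)U(y)\dy$ vanishes uniformly on $[0,s(t)]$ because $\ud h(t)-s(t)\to\yy$. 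Pairing this with the natural super-solution obtained by extending $(U,V)$ by $(u^*,v^*)$ to $[0,\yy)$ and letting $\ep\downarrow0$ then yields the claimed uniform convergence on $[0,s(t)]$.
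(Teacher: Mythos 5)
Your upper-bound argument is essentially the paper's Lemma \ref{l2.1}: the asymptotics you list for $I(h)$ agree with \eqref{2.1}, and comparing $h$ with an explicit $\bar h$ solving (a constant multiple of) the separable ODE is exactly what is done there. The problem is your lower bound. The ``interior plateau up to the front'' estimate
\[
u(t,x)\ge u^*-\ep \quad\text{for } t\ge T_\ep,\ x\in[0,h(t)-L_\ep]
\]
with a \emph{fixed} $L_\ep$ is false in the accelerated regime. A point $x=h(t)-L$ first enters the domain at the time $t_0$ with $h(t_0)=h(t)-L$, and since $h'$ is unbounded one has $t-t_0\to 0$ as $t\to\yy$; starting from the boundary value $0$ at time $t_0$ and with all terms in the $u$-equation uniformly bounded, $u(t,h(t)-L)\le C(t-t_0)\to 0$. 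So no plateau within bounded distance of an accelerating front can exist. The estimate you would actually need is a plateau on $[0,(1-\delta)h(t)]$ (which does suffice to recover the order of $I(h)$ in the free-boundary integral), but that statement cannot be ``secured by a bump sub-solution whose support tracks $h(t)$'': a comparison sub-solution must come with its \emph{own} front $\ud h(t)$ satisfying the differential inequality of Proposition \ref{p2.2}, and choosing that front to track the unknown $h(t)$ is circular --- knowing that the plateau reaches $(1-\delta)h(t)$ is essentially equivalent to already knowing the rate of $h$.

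The paper resolves this by constructing, in each of Lemmas \ref{l2.2}--\ref{l2.5}, a full sub-solution triple $(\ud u,\ud v,\ud h)$ in which the candidate rate is built into $\ud h$ from the start and the profile $\zeta$ equals $u^*_\ep$ only up to a point like $7\ud h(t)/8$ (or $\ud h(t)-(t+\theta)^{\alpha}$), decaying to $0$ over a transition layer of \emph{growing} width. Two points there are not visible in your sketch and are where the real work lies: (i) the free-boundary inequality for $\ud h$ is verified using only the portion of the profile at height $u^*_\ep$ on $[X_\ep,7\ud h/8]$, which still captures the full order of $I(\ud h)$ because the tail integrals are dominated by $|x-y|\sim\ud h$; and (ii) in the transition layer $\ud u_t\neq 0$, and one must check that this time derivative is dominated by the nonlocal gain there --- this is the role of Proposition \ref{p2.1} and of the layer-width choices, which are tuned case by case (the exponent $\rho$ in case \eqref{1.6}, the widths $(t+\theta)^{\alpha}$ and $e^{\ln^{\alpha}(t+\theta)}$ in the two subcases of \eqref{1.8}). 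Your final step for the uniform convergence on $[0,s(t)]$ via $(1-\ep)(U,V)$ is workable in spirit (concavity of $H,G$ makes the reaction terms cooperate), but it faces the same unaddressed difficulty near the moving edge; in the paper the uniform convergence falls out of the same sub-solution used for the lower bound on $h$, since $\zeta\to 1$ uniformly on $[0,s(t)]$ whenever $s(t)$ is of smaller order than $\ud h(t)$, combined with the upper bound $\limsup(u,v)\le(U,V)$ from \cite{LL2}.
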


We would like to mention that some results like Theorem \ref{t1.1} for the Fisher-KPP equation in \cite{CDLL,LLW,DLZ} and the epidemic model in \cite{NV} can be proved by following analogous lines as in this paper. Our arguments are based on some suitably upper and lower solutions, and are mainly inspired by \cite{DN2,DN3,DN4}. In \cite{DN2}, Du and Ni studied the rate of accelerated spreading for the Fisher-KPP equation with nonlocal diffusion and free boundaries in one dimensional space for algebraic decay function, namely, those behaving like $|x|^{-\gamma}$ for $\gamma\in(1,2]$ near infinity. For the case in high dimensional space with  radially symmetry, Du and Ni \cite{DN3} obtained the rate of accelerated spreading when spreading happens. Moreover, the rate of accelerated spreading for an epidemic model with nonlocal diffusions and free boundaries was also discussed by Du, Ni and Wang \cite{DNW}. For the corresponding Cauchy problem with such nonlocal diffusion operator, accelerated spreading also can happen if kernel function converges to $0$ as $|x|\to\yy$ slower than any exponentially decaying function, nowadays well known as exponentially unbounded or heavy-tailed (see e.g. \cite{JGar,MaJe,BGHP,FKT,XLL}). On the other hand, with slowly decaying initial function, i.e., decaying slower than any exponentially decaying function, accelerated propagation can appear for the reaction-diffusion equation (see e.g. \cite{HR,Hen}), the fractional reaction-diffusion equation (see e.g. \cite{FYan,SV}), and the chemotaxis-growth system (see e.g. \cite{WX}). One will see that for reaction-diffusion equation, the initial function plays a similar role with the kernel function for nonlocal diffusion problem.

For the recent development on the free boundary problem with local or nonlocal diffusion from ecology, one can refer to, for example, \cite{DN5} for the approximation of local diffusion version in \cite{DL} by nonlocal diffusion counterpart in \cite{CDLL,DLZ}, \cite{DLou,KMY} for the bistable or combustion nonlinear term, \cite{DFS,CTTW} for the model with delay, \cite{ZZLD,NV,DLNZ,WND} for the epidemic models with nonlocal diffusions, \cite{DWZ,DWu,DNS} for L-V competition or predator-prey models, and \cite{PLL,ZWang} for the model with nonlocal diffusion and seasonal succession.

Before ending this introduction, we would like to discuss how we guess the correct rate of free boundary $h(t)$. As in the proof of Lemma \ref{l2.1}, we need to first estimate the orders of $\int_{1}^{h}J_i(y)y\dy$ and $h\int_{h}^{\yy}J_i(y)\dy$, and find the largest order of them. For example, if \eqref{1.6} holds, simple calculations show that the order of $h\int_{h}^{\yy}J_i(y)\dy$ is larger than that of $\int_{1}^{h}J_i(y)y\dy$, i.e.,
\[\int_{1}^{h}J_i(y)y\dy=h\int_{h}^{\yy}J_i(y)\dy o_{h\to\yy}(1).\]
Moreover,
\[h\int_{h}^{\yy}J_i(y)\dy\sim\frac{h\ln^{\alpha}\ln h}{\ln^{\beta-1}h},\]
which, combined with the free boundary condition of \eqref{1.1}, implies that
\[h'(t)\le C\frac{h(t)\ln^{\alpha}\ln h(t)}{\ln^{\beta-1}h(t)}, {\rm ~ i.e., ~ }\ln' h(t)\le C\frac{\ln^{\alpha}\ln h(t)}{\ln^{\beta-1}h(t)} ~ ~ {\rm for ~ some ~ }C>0.\]
Then we need to find a function $\bar h(t)$ such that for some $T>0$, $\ln \bar h(T)\ge \ln h(T)$ and
\[\ln' \bar h(t)\ge C\frac{\ln^{\alpha}\ln \bar h(t)}{\ln^{\beta-1}\bar h(t)}, ~ ~ \forall t>T.\]
This will be the correct rate of $h$ if we can construct an adequate lower solution $\ud h(t)$ satisfying $\ln \ud h(t)\sim \ln \bar h(t)$.

\begin{remark}\label{r1.1}

Very recently, Du and Ni (\cite{DN4}) considerably sharpened the results in \cite{DN2}. More precisely, by assuming that kernel function $J$ satisfy
\[\dd\lim_{|x|\to\yy}J(|x|)|x|^{-\gamma}=\lambda ~ ~ {\rm for ~ some ~ }\lambda>0 ~ {\rm and ~ all~ }\gamma\in(1,2],\]
they obtained the exact rate of accelerated spreading, i.e.,
\bess
\begin{cases}
\dd\lim_{t\to\yy}\frac{h(t)}{t^{\frac{1}{\gamma-1}}}=\frac{2^{2-\gamma}}{2-\gamma}\mu\lambda, ~ ~ {\rm if ~ }\gamma\in(1,2),\\
\dd\lim_{t\to\yy}\frac{h(t)}{t\ln t}=\mu\lambda, ~ ~ {\rm if ~ }\gamma=2,
\end{cases}
\eess
where $\mu$ is the expanding rate of free boundary. Inspired by their work, if we strengthen conditions \eqref{1.6}-\eqref{1.8} as above, similar results are expected to hold for \eqref{1.1}.

Moreover, if one enhances the assumptions on kernels in \cite{JGar, MaJe, XLL} or initial data in \cite{HR,Hen,FYan,SV,WX}, for example, letting $J(|x|)e^{-|x|/\ln|x|}\to\lambda>0$ as $|x|\to\yy$ or $u(0,x)x^{\alpha}\to\lambda>0$ for some $\alpha>0$ as $x\to\yy$, can we get better estimates on the location of level set? These will be very interesting.
\end{remark}

\section{Proof of Theorem \ref{t1.1}}

In this section, we will prove Theorem \ref{t1.1} by several lemmas. The arguments depend crucially on the suitably upper and lower solutions, as well as the proper comparison techniques, which is inspired by \cite{DN2,DN3,DN4,DNW}. Firstly we give the upper bounds for free boundary $h(t)$.

\begin{lemma}
\label{l2.1}Let $(u,v,h)$ be the unique solution of \eqref{1.1} and spreading happen. Then there exists a positive constant $C$ depending only on $(a,b,H,G,J_1,J_2, \mu_1,\mu_2)$ such that for all large $t$,
\bess
\begin{cases}
\ln h(t)\le C t^{\frac{1}{\beta}}\ln^{\frac{\alpha}{\beta}}t &{\rm if ~ }\eqref{1.6} ~ {\rm holds},\\
h(t)\le C t^{\frac{1}{\gamma-1}}\ln^{\frac{\beta}{\gamma-1}}t &{\rm if ~ }\eqref{1.7} ~ {\rm holds},\\
h(t)\le C t\ln^{\beta+1} t &{\rm if ~ }\eqref{1.8} ~ {\rm holds ~ with ~ }\beta>-1,\\
h(t)\le C t\ln\ln t &{\rm if ~ }\eqref{1.8} ~ {\rm holds ~ with ~ }\beta=-1.
\end{cases}
\eess
\end{lemma}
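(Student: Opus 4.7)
The plan is to extract from the free-boundary equation in \eqref{1.1} a differential inequality for $h(t)$ and then integrate it separately in each of the four cases. Since spreading occurs, the solution $(u,v)$ converges locally uniformly to the bounded stationary solution $(U,V)$ of \eqref{1.4}, so there exists $M>0$ with $u(t,x)+v(t,x)\le M$ for all $t\ge 0$ and $x\in[0,h(t)]$. Inserting this bound into the free-boundary condition and applying Fubini's theorem with the substitution $z=y-x$ yields, for $i=1,2$,
\[
\int_0^{h(t)}\!\int_{h(t)}^{\yy}J_i(x-y)\,\dy\,\dx
= \int_0^{h(t)} z\,J_i(z)\,{\rm d}z + h(t)\int_{h(t)}^{\yy} J_i(z)\,{\rm d}z,
\]
so that, with $C=M(\mu_1+\mu_2)$, I obtain the master inequality
\[
h'(t)\le C\sum_{i=1}^{2}\left[\int_0^{h(t)} z\,J_i(z)\,{\rm d}z + h(t)\int_{h(t)}^{\yy} J_i(z)\,{\rm d}z\right].
\]

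Next, for each kernel hypothesis \eqref{1.6}--\eqref{1.8} I would compute the asymptotic orders of the two integrals and identify the dominant one, as indicated in the paragraph preceding Remark \ref{r1.1}. In case \eqref{1.6} the second term dominates (of order $h\ln^{\alpha}\ln h/\ln^{\beta-1}h$), and dividing by $h(t)$ gives $(\ln h)'(t)\le C\ln^{\alpha}\ln h(t)/\ln^{\beta-1}h(t)$. In case \eqref{1.7} both terms are of order $h^{2-\gamma}\ln^{\beta}h$, producing $h'(t)\le C\,h(t)^{2-\gamma}\ln^{\beta}h(t)$. In case \eqref{1.8} with $\beta>-1$ the first term dominates, giving $h'(t)\le C\ln^{\beta+1}h(t)$; and for $\beta=-1$ the analogous comparison yields $h'(t)\le C\ln\ln h(t)$.

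Each of these inequalities is separable. After integration, an integration-by-parts identity of the form
\[
\int_{h_0}^{h}\frac{\,{\rm d}s}{\ln^{k}s}=\frac{h}{\ln^{k}h}\bigl(1+o(1)\bigr)\quad{\rm as~}h\to\yy,
\]
(together with the analogous $\int{\rm d}s/\ln\ln s\sim s/\ln\ln s$ for case \eqref{1.8} with $\beta=-1$) yields implicit bounds in which the right-hand side still carries a logarithmic dependence on $h(t)$. A bootstrap step then produces the explicit rates claimed: from a coarse first-round control such as $\ln h(t)\le Ct$ (or $h(t)\le Ct$, depending on the case), I substitute the rough asymptotics $\ln h\sim\ln t$ (respectively $\ln\ln h\sim\ln\ln t$) back into the logarithmic factors and recover $t^{1/\beta}\ln^{\alpha/\beta}t$, $t^{1/(\gamma-1)}\ln^{\beta/(\gamma-1)}t$, $t\ln^{\beta+1}t$, and $t\ln\ln t$ respectively.

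The main technical difficulty lies in the bootstrap step, especially in case \eqref{1.6} where a genuine double logarithm $\ln\ln h$ must be controlled after replacing $h$ by its eventual upper bound without losing the correct exponent. A secondary point is turning the bilateral comparison $J_i\sim\cdot$ from \eqref{1.6}--\eqref{1.8} into rigorous asymptotic formulas for $\int_1^{h} zJ_i(z)\,{\rm d}z$ and $h\int_h^{\yy} J_i(z)\,{\rm d}z$; this is routine once the integrals are split at a fixed large threshold beyond which the comparison applies, and it is exactly where the dominant term of the master inequality is pinned down.
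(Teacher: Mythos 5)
Your derivation of the master inequality $h'(t)\le C\sum_i\big[\int_0^{h}zJ_i(z)\,{\rm d}z+h\int_h^{\yy}J_i(z)\,{\rm d}z\big]$ and the case-by-case identification of the dominant term coincide exactly with the paper's first step (its $I(h)$ and \eqref{2.1}); where you diverge is in how the resulting differential inequality is closed. The paper does not separate variables: it writes down an explicit candidate $\bar h(t)=K(t+\theta)^{\frac{1}{\gamma-1}}\ln^{\frac{\beta}{\gamma-1}}(t+\theta)$ (and its analogues), checks directly that $\bar h'\ge C_1\bar h^{2-\gamma}\ln^{\beta}\bar h$ with $\bar h(T)>h(T)$, and concludes by ODE comparison. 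Your route --- integrate the separable inequality to get, e.g., $h^{\gamma-1}/\ln^{\beta}h\le C't$ or $(\ln h)^{\beta}/\ln^{\alpha}\ln h\le C't$, then recover the explicit rate --- is equally valid and arguably more systematic, but one step needs care: your ``substitute the coarse upper bound back into the logarithmic factor'' bootstrap only goes the right way when the relevant exponent ($\alpha$ in case \eqref{1.6}, $\beta$ in case \eqref{1.7}) is nonnegative; for negative exponents the log factor is decreasing in $h$ and an upper bound on $h$ bounds it from \emph{below}, not above. The clean fix is to avoid substitution altogether and invert the monotone antiderivative: with $F(g)=g^{\beta}\ln^{-\alpha}g$ one checks $F\big(Kt^{1/\beta}\ln^{\alpha/\beta}t\big)\ge K^{\beta}\beta^{\alpha}t(1+o(1))>C't\ge F(\ln h(t))$ for $K$ large, and monotonicity of $F$ gives the bound for either sign of $\alpha$ --- which is, in effect, exactly the supersolution verification the paper performs. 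With that adjustment your argument is complete and equivalent to the paper's.
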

\begin{proof}
Simple computations show that for $i=1,2$,
 \bess
 \int_0^{h}\!\int_{h}^{\infty}\!J_i(x-y){\rm d}y{\rm d}x&=&\int_0^{1}J_i(y)y{\rm d}y+\int_{1}^{h}J_i(y)y{\rm d}y+h\int_{h}^{\infty}\!\!J_i(y){\rm d}y:=I(h).
 \eess
 Then by estimating the two terms $\int_{1}^{h}J_i(y)y{\rm d}y$ and $h\int_{h}^{\infty}\!\!J_i(y){\rm d}y$, we have
 \bes\label{2.1}
\begin{cases}
\dd I(h)\sim h\frac{\ln^{\alpha}\ln h}{\ln^{\beta-1} h} &{\rm if ~ }\eqref{1.6} ~ {\rm holds},\\
\dd I(h)\sim h^{2-\gamma}\ln^{\beta} h &{\rm if ~ }\eqref{1.7} ~ {\rm holds},\\
\dd I(h)\sim \ln^{\beta+1}h &{\rm if ~ }\eqref{1.8} ~ {\rm holds ~ with ~ }\beta>-1,\\
\dd I(h)\sim \ln\ln h &{\rm if ~ }\eqref{1.8} ~ {\rm holds ~ with ~ }\beta=-1.
\end{cases}
\ees
On the other hand, since there is a large $T>0$ such that $(u(t,x),v(t,x))\le(2u^*,2v^*)$ for $t\ge T$ and $x\in[0,h(t)]$, we have that for $t\ge T$,
\[h'(t)\le\int_0^{h(t)}\!\!\!\int_{h(t)}^{\infty}\!2\big(\mu_1u^*J_1(x-y)+\mu_2v^*J_2(x-y)\big){\rm d}y{\rm d}x,\]
which, combined with \eqref{2.1}, implies that there exists a positive constant $C_1$ relying only on $(a,b,H,G,J_1,J_2, \mu_1,\mu_2)$ such that for $t\ge T$,
 \bess
\begin{cases}
\dd h'(t)\le C_1 h(t)\frac{\ln^{\alpha}\ln h(t)}{\ln^{\beta-1} h(t)} &{\rm if ~ }\eqref{1.6} ~ {\rm holds},\\
\dd h'(t)\le C_1h^{2-\gamma}(t)\ln^{\beta} h(t) &{\rm if ~ }\eqref{1.7} ~ {\rm holds},\\
\dd h'(t)\le C_1 \ln^{\beta+1}h(t) &{\rm if ~ }\eqref{1.8} ~ {\rm holds ~ with ~ }\beta>-1,\\
\dd h'(t)\le C_1\ln\ln h(t) &{\rm if ~ }\eqref{1.8} ~ {\rm holds ~ with ~ }\beta=-1.
\end{cases}
\eess

We next only handle the case where \eqref{1.7} holds since other cases can be done similarly.  Let
\[\bar{h}(t)=K(t+\theta)^{\frac{1}{\gamma-1}}\ln^{\frac{\beta}{\gamma-1}}(t+\theta), ~ ~ t\ge T,\]
where $\theta$ is a positive constant to be determined later and $K$ is large enough such that $K^{\gamma-1}(\gamma-1)^{\beta-1}\ge 2C_1$.
Then it is not hard to show that by choosing $\theta$ to be large enough, we have
 \bess\bar{h}'(t)\ge K\frac{1}{\gamma-1}(t+\theta)^{\frac{2-\gamma}{\gamma-1}}\ln^{\frac{\beta}{\gamma-1}}(t+\theta)\ge C_1 \bar h^{2-\gamma}(t)\ln^{\beta} \bar h(t) ~ ~ {\rm for ~ }t\ge T.
 \eess
Moreover, we may let $\theta$ be sufficiently large if necessary such that $\bar{h}(T)>h(T)$. A simple comparison argument yields $h(t)\le \bar h(t)$ for $t\ge T$. Thus the proof is complete.
\end{proof}

 The remainder of this section is devoted to the proof of lower bounds. Firstly, inspired by \cite[Lemma 2.2]{DN4}, we give a general technical lemma which is crucial for the construction of the desired lower solutions.

 \begin{proposition}\label{p2.1}Assume that positive constants $k_1,k_2$ and $l$ satisfy $k_2>k_1>l\gg1$. Let $J$ satisfy {\bf (J)} and $\rho\ge1$ be a constant. Define
 \[\xi(x)=\min\kk\{1,\;\bigg(\frac{k_2-x}{k_1}\bigg)^{\rho}\rr\}.\]
Then for any given small $\ep>0$, there exists a $k_0>l$, depending only on $l, \rho, \ep$ and $J$, such that when $k_1>k_0$ and $k_2-k_1>2k_0$, we have
 \[\int_{l}^{k_2}J(x-y)\xi(y)\dy\ge (1-\ep)\xi(x) ~ ~ {\rm for ~ }x\in[k_0,k_2].\]
 \end{proposition}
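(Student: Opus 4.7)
My plan is to use the substitution $z=y-x$ together with the evenness of $J$ to recast the integral symmetrically as
\[
\int_l^{k_2} J(x-y)\xi(y)\,dy \;=\; \int_0^{x-l} J(z)\xi(x-z)\,dz \;+\; \int_0^{k_2-x} J(z)\xi(x+z)\,dz,
\]
and then to split the analysis by the size of $\alpha := k_2-x$ relative to $k_1$. I first pick $M=M(\varepsilon,J)$ so large that $\int_{|z|>M} J(z)\,dz < \varepsilon/2$, and set $k_0 := \max\{l+M,\,6\rho M/\varepsilon\}$, which depends only on $l,\rho,\varepsilon,J$ as required.

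\emph{Tail case $\alpha \le k_1/2$.} Here $\xi(x)=(\alpha/k_1)^\rho \le 2^{-\rho}$. The hypothesis $k_2-k_1>2k_0\ge 2l$ will yield $x-l\ge\alpha$, so I can regroup the two integrals as $\int_0^\alpha J(z)[\xi(x-z)+\xi(x+z)]\,dz + \int_\alpha^{x-l} J(z)\xi(x-z)\,dz$. For $z\in[0,\alpha]$, both $\alpha\pm z$ lie in $[0,k_1]$ so no truncation of $\xi$ occurs, and the convexity of $s\mapsto s^\rho$ for $\rho\ge 1$ (in the symmetric form $(s+t)^\rho+(s-t)^\rho\ge 2s^\rho$) gives $\xi(x-z)+\xi(x+z)\ge 2\xi(x)$. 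For $z\in[\alpha,x-l]$, a split on whether $\alpha+z\le k_1$ or not shows $\xi(x-z)\ge 2\xi(x)$ as well; the subcase $\alpha+z>k_1$ uses $\alpha\le k_1/2$ and $\rho\ge 1$ to ensure $2\xi(x)\le 1=\xi(x-z)$. Combined with $\int_0^{x-l} J\ge 1/2-\varepsilon/4$ (from $x-l\ge M$), the total is $\ge 2\xi(x)(1/2-\varepsilon/4) = (1-\varepsilon/2)\xi(x)$.

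\emph{Bulk case $\alpha>k_1/2$.} Here $\xi(x)\ge 2^{-\rho}$ and $\xi$ varies slowly near $x$. A short case analysis tracking the truncation $\min\{1,\cdot\}$ in both $\xi(x)$ and $\xi(x+z)$, combined with the elementary inequality $(1-t)^\rho\ge 1-\rho t$ for $t\in[0,1]$ and the choice $k_1>k_0\ge 6\rho M/\varepsilon$, shows $\xi(x+z)\ge(1-\varepsilon/3)\xi(x)$ for all $|z|\le M$. Since $x\ge k_0\ge l+M$ and $\alpha>k_1/2\ge M$, the interval $[-M,M]$ is contained in $[l-x,k_2-x]$, whence the integral is $\ge \int_{-M}^M J(z)\xi(x+z)\,dz \ge (1-\varepsilon/3)(1-\varepsilon/2)\xi(x)\ge(1-\varepsilon)\xi(x)$.

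The principal obstacle is the tail case, where $\xi(x)$ may be arbitrarily small as $x\to k_2$ and the naive bound $\int J(z)\xi(x+z)\,dz\approx\xi(x)\int J$ fails because nearly half of the mass of $J$ is wasted (as $\xi\equiv 0$ beyond $k_2$). The convexity--symmetrization trick compensates for this by pairing $\xi(x-z)$ against $\xi(x+z)$ and exploiting the upward bending of $t^\rho$ for $\rho\ge 1$; the careful bookkeeping of the truncation $\min\{1,\cdot\}$ across all sub-cases is the main technical subtlety.
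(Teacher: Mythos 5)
Your proof is correct, and its two essential ingredients are the same as in the paper's argument: truncating the kernel to a compact interval $[-M,M]$ carrying mass at least $1-\ep/2$, and exploiting the convexity of $t\mapsto t^{\rho}$ (via $\xi(x-z)+\xi(x+z)\ge 2\xi(x)$) to recover the mass of $J$ that is lost beyond $x=k_2$. The organization differs in a minor but genuine way: the paper splits $[k_0,k_2]$ into four zones keyed to $l_0$-neighborhoods of $k_2-k_1$ and $k_2$ (where $l_0$ plays the role of your $M$), and in the transition zone around $x=k_2-k_1$ it uses the additive Lipschitz bound $|\xi(x)-\xi(y)|\le \frac{\rho}{k_1}|x-y|$ together with the lower bound $\xi(x)\ge 2^{-\rho}$ there, absorbing the error $\rho l_0/k_1$ into $\frac{\ep}{2}\xi(x)$ by taking $k_0=2^{\rho+1}\rho l_0/\ep$. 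You instead use only two cases keyed to whether $k_2-x\le k_1/2$, and in the bulk case you replace the additive estimate by the multiplicative one $(1-t)^{\rho}\ge 1-\rho t$, which handles the paper's Cases 1 and 2 simultaneously; in the tail case you push the symmetrization slightly further than the paper does by also using $\xi(x-z)\ge 2\xi(x)$ on $[\alpha,x-l]$, whereas the paper simply discards that contribution. Both routes yield a $k_0$ of the same order $\rho M/\ep$ depending only on $l,\rho,\ep,J$; yours is marginally more economical in its case analysis, the paper's is marginally more mechanical to verify. I checked the small points your sketch leaves implicit (that $x-l\ge\alpha$ and $x-l\ge M$ in the tail case, that no truncation occurs for $z\in[0,\alpha]$ since $\alpha+z\le 2\alpha\le k_1$, and that $\alpha>M$ in the bulk case), and they all follow from your choice $k_0=\max\{l+M,6\rho M/\ep\}$ and the hypotheses $k_1>k_0$, $k_2-k_1>2k_0$.
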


 \begin{proof} Due to {\bf (J)}, for any given small $\ep>0$, there exists a $l_0\gg1$ such that
 \[\dd\int_{-l_0}^{l_0}J(y)\dy\ge 1-\frac{\ep}{2}.\]
 If we extend $\xi(x)$ by $\xi(x)\equiv0$ for $x\ge k_2$, it is easy to verify that $\xi(x)$ is convex in $x\ge k_2-k_1$ and
 \bes\label{2.2}
 |\xi(x)-\xi(y)|\le \frac{\rho}{k_1}|x-y| ~ ~ {\rm for ~ } x, y\in[0,\yy).
 \ees

 Set $k_0>2l_0$, $k_1>k_0$ and $k_2-k_1>2k_0$. We now complete the proof by
  \bess
&& {\bf Case ~  1:} ~ x\in[k_0,k_2-k_1-l_0]; ~ ~ ~ ~ ~ ~  ~ ~ {\bf Case ~  2:} ~ x\in[k_2-k_1-l_0,k_2-k_1+l_0];\\
 && {\bf Case ~  3:} ~ x\in[k_2-k_1+l_0,k_2-l_0]; ~ ~ {\bf Case  ~ 4:} ~ x\in[k_2-l_0,k_2].\eess

For Case 1, straightforward computations yield
 \bess
 \dd\int_{l}^{k_2}J(x-y)\xi(y)\dy&\ge&\int_{l}^{k_2-k_1}J(x-y)\dy
 =\int_{l-x}^{k_2-k_1-x}J(y)\dy\\
 &\ge&\int_{l-k_0}^{l_0}J(y)\dy\ge\int_{-l_0}^{l_0}J(y)\dy\\
 &\ge&1-\frac{\ep}{2}\ge(1-\ep)\xi(x).
 \eess

For Case 2, in light of \eqref{2.2}, we have
 \bess
 \int_{l}^{k_2}J(x-y)\xi(y)\dy&=&\int_{l-x}^{k_2-x}J(y)\xi(x+y)\dy
 \ge\int_{l-k_2+k_1+l_0}^{k_1-l_0}J(y)\xi(x+y)\dy\\
 &\ge&\int_{-l_0}^{l_0}J(y)\xi(x+y)\dy\\
 &=&\int_{-l_0}^{l_0}J(y)\xi(x)\dy+\int_{-l_0}^{l_0}J(y)\left[\xi(x+y)-\xi(x)\right]\dy\\
 &\ge&\int_{-l_0}^{l_0}J(y)\xi(x)\dy-\frac{\rho l_0}{k_1}\ge(1-\frac{\ep}{2})\xi(x)-\frac{\rho l_0}{k_1}\\
 &=&(1-\ep)\xi(x)+\frac{\ep}{2}\xi(x)-\frac{\rho l_0}{k_1}\\
 &\ge&(1-\ep)\xi(x)+\frac{\ep}{2^{\rho+1}}-\frac{\rho l_0}{k_1}\ge(1-\ep)\xi(x)
 \eess
 provided that $k_1\ge\frac{2^{\rho+1}\rho l_0}{\ep}$.

For Case 3, clearly, we have that for any $y\in[0,l_0]$, $x+y\ge x-y\ge k_2-k_1$.
Note that $\xi(x)$ is a convex function of $x$ for $x\ge k_2-k_1$. Then we can deduce
 \bess
 \dd\int_{l}^{k_2}J(x-y)\xi(y)\dy&=&\int_{l-x}^{k_2-x}J(y)\xi(x+y)\dy
 \ge\int_{l-k_2+k_1-l_0}^{l_0}J(y)\xi(x+y)\dy\\
 &\ge&\int_{-l_0}^{l_0}J(y)\xi(x+y)\dy
 =\int_{0}^{l_0}J(y)[\xi(x+y)+\xi(x-y)]\dy\\
 &\ge&2\int_{0}^{l_0}J(y)\xi(x)\dy\ge(1-\ep)\xi(x).
 \eess

For Case 4, since $\xi(x)\equiv0$ for $x\ge k_2$, we see
 \bess
 \dd\int_{l}^{k_2}J(x-y)\xi(y)\dy&=&\int_{l-x}^{k_2-x}J(y)\xi(x+y)\dy
 \ge\int_{-l_0}^{k_2-x}J(y)\xi(x+y)\dy\\
 &=&\int_{-l_0}^{l_0}J(y)\xi(x+y)\dy-\int_{k_2-x}^{l_0}J(y)\xi(x+y)\dy\\
 &=&\int_{-l_0}^{l_0}J(y)\xi(x+y)\dy.
 \eess
Then by the similar arguments as in Case 3, we can prove Case 4. Thus setting $k_0=\frac{2^{\rho+1}\rho l_0}{\ep}$, we finish the proof.
 \end{proof}

The following comparison principle for free boundary problem with nonlocal diffusions will be used often in the later discussion.
\begin{proposition}{\rm ({\rm \cite[Proposition 2.3]{LL2} })}\label{p2.2} For any $T>0$, we assume that $P_i$ with $i=1,2$ satisfy {\bf (J)}, $s(t)$ and $r(t)$ are continuous and increasing in $[0,T]$, as well as $\underline{h}(t),\; \bar{h}(t)\in C^1([0,T])$ with $s(t)\le r(t)\le \min\{\underline{h}(t),\bar{h}(t)\}$ for $t\in[0,T]$. Let $(\underline{u},\ud v)\in [C([0,T]\times[s(t),\underline{h}(t)])]^2$, $(\bar{u},\bar{v})\in [C([0,T]\times[s(t),\bar{h}(t)])]^2$, the left derivatives $(\underline{u}_t(t-0,x), \ud v_t(t-0,x))$ and $(\bar{u}_t(t-0,x), \bar{v}_t(t-0,x))$ exist in $(0,T]\times[r(t),\underline{h}(t)]$ and $(0,T]\times[r(t),\bar{h}(t)]$, respectively. Suppose $(\underline{u}(t,x),\ud v(t,x))\le (\bar u(t,x),\bar{v}(t,x))$ for $0<t\le T$ and $ x\in[s(t),r(t)]$.
Moreover, $(\underline{u},\ud v, \underline{h})$ and $(\bar{u},\bar{v}, \bar{h})$ satisfy
\bess
\left\{\begin{aligned}
&\underline u_t(t-0,x)\le d_1\displaystyle\int_{s(t)}^{\underline h(t)}P_1(x-y)\underline u(t,y){\rm d}y-d_1\underline u-a\ud u+H(\ud v), && 0<t\le T,~x\in[r(t),\underline h(t)),\\
&\underline v_t(t-0,x)\le d_2\displaystyle\int_{s(t)}^{\underline h(t)}P_2(x-y)\underline v(t,y){\rm d}y-d_2\underline v-b\ud v+G(\ud u), && 0<t\le T,~x\in[r(t),\underline h(t)),\\
&\underline u(t,\underline h(t))\le0,~ \underline v(t,\underline h(t))\le0, && 0<t\le T,\\
&\underline h'(t)<\displaystyle\int_{s(t)}^{\underline h(t)}\int_{\underline h(t)}^{\infty}
\big(\mu_1P_1(x-y)\underline u(t,x)+\mu_2P_2(x-y)\ud v(t,x)\big){\rm d}y{\rm d}x,&& 0<t\le T,\\
&\underline h(0)<\bar h(0),\;\;\underline u(0,x)\le \bar u(0,x), ~ \ud v(0,x)\le \bar{v}(0,x), && x\in[s(0),\underline h(0)]
\end{aligned}\right.
\eess
and
\bess
\left\{\begin{aligned}
&\bar u_t(t-0,x)\ge d_1\displaystyle\int_{s(t)}^{\bar h(t)}P_1(x-y)\bar u(t,y){\rm d}y-d_1\bar u-a\bar u+H(\bar v), && 0<t\le T,~x\in[r(t),\bar h(t)),\\
&\bar v_t(t-0,x)\ge d_2\displaystyle\int_{s(t)}^{\bar h(t)}P_2(x-y)\bar v(t,y){\rm d}y-d_2\bar v-b\bar v+G(\bar u), && 0<t\le T,~x\in[r(t),\bar h(t)),\\
&\bar u(t,\bar h(t))\ge0,~ \bar v(t,\bar h(t))\ge0, && 0<t\le T,\\
&\bar h'(t)\ge\displaystyle\int_{s(t)}^{\bar h(t)}\int_{\bar h(t)}^{\infty}
\big(\mu_1P_1(x-y)\bar u(t,x)+\mu_2P_2(x-y)\bar v(t,x)\big){\rm d}y{\rm d}x,&& 0<t\le T,\\
\end{aligned}\right.
\eess
respectively. Then we have
\[\underline{h}(t)\le\bar{h}(t), ~ ~ ~( \underline{u}(t,x),\ud v(t,x))\le (\bar{u}(t,x),\bar{v}(t,x))\;\; ~ {\rm for ~ }\;(t,x)\in[0,T]\times[s(t),\underline{h}(t)].\]
\end{proposition}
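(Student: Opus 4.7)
My plan is to follow the standard two-stage strategy for comparison principles in coupled nonlocal free boundary problems: first establish the ordering of the free boundaries $\underline h(t)\le\bar h(t)$ on $[0,T]$, and along the way derive the pointwise ordering $(\underline u,\underline v)\le(\bar u,\bar v)$ on the common spatial domain. The $(u,v)$ system has a cooperative structure thanks to $H',G'>0$, which is what makes a joint minimum principle available.

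I would introduce $t^*:=\sup\{t\in[0,T]:\underline h(s)<\bar h(s)\text{ for all }s\in[0,t]\}$. By continuity and the strict initial ordering $\underline h(0)<\bar h(0)$, $t^*>0$. On $[0,t^*)$ the containment $[s(t),\underline h(t)]\subsetneq[s(t),\bar h(t)]$ is strict, so I set $W=\bar u-\underline u$, $Z=\bar v-\underline v$ on $\{0\le t<t^*,\,s(t)\le x\le\underline h(t)\}$. Subtracting the two integro-differential inequalities and using the mean value theorem in the form $H(\bar v)-H(\underline v)=H'(\xi)Z$, $G(\bar u)-G(\underline u)=G'(\eta)W$ yields a cooperative system
\[
W_t\ge d_1\!\int_{s(t)}^{\underline h(t)}\!P_1(x-y)W(t,y)\,dy-(d_1+a)W+H'(\xi)Z+R_1(t,x),
\]
together with an analogous inequality for $Z$, where the \emph{nonnegative} remainder $R_1(t,x)=d_1\!\int_{\underline h(t)}^{\bar h(t)}P_1(x-y)\bar u(t,y)\,dy$ arises from the mismatch of integration domains. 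The initial data satisfy $W(0,x),Z(0,x)\ge0$.

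To conclude $W,Z\ge0$ I would apply a standard weighted minimum argument to the pair $(\tilde W,\tilde Z)=(e^{-Mt}W,e^{-Mt}Z)$ for sufficiently large $M$. If either component had a first negative minimum at some point $(t_\circ,x_\circ)$, evaluating the integro-differential inequality there—using that the nonlocal integral of a function at its minimum is bounded below by that minimum value times $\int P_1\le 1$, and that the cooperative cross term $H'(\xi)\tilde Z$ is at worst of the same negative order—would force the left-hand time derivative to be strictly positive, contradicting the minimum property. A symmetric argument handles the case where $\tilde Z$ realizes the minimum. Hence $(W,Z)\ge0$ on $[0,t^*)\times[s(t),\underline h(t)]$ and, by continuity, at $t=t^*$ as well.

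Finally I would rule out $t^*<T$ by contradiction: first-touching of the boundaries at $t^*$ forces $\underline h'(t^*)\ge\bar h'(t^*)$, while substituting $(\underline u,\underline v)\le(\bar u,\bar v)$ at $t^*$ (valid on $[s(t^*),\underline h(t^*)]=[s(t^*),\bar h(t^*)]$) into the two free boundary equations—and exploiting the \emph{strict} inequality $<$ built into the hypothesis on $\underline h'$ versus $\ge$ on $\bar h'$—yields $\underline h'(t^*)<\bar h'(t^*)$, a contradiction. Hence $t^*=T$ and the full conclusion follows. The main obstacle is carrying out the joint minimum step rigorously: because the system is truly coupled, the minimum-point argument must allow either of $\tilde W,\tilde Z$ to realize the minimum, and the cross terms $H'(\xi)\tilde Z$ and $G'(\eta)\tilde W$ must be handled simultaneously; the strict inequality built into the hypothesis on $\underline h'$ is equally indispensable for closing the final contradiction.
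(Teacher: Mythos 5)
The paper itself does not prove this proposition: it is quoted verbatim from \cite[Proposition 2.3]{LL2}, so there is no in-paper proof to compare against. Your two-stage strategy --- working on the maximal interval $[0,t^*)$ where $\underline{h}<\bar{h}$, proving $(W,Z)=(\bar u-\underline{u},\bar v-\underline{v})\ge 0$ there by a weighted minimum argument that exploits the cooperative structure $H',G'>0$, and then closing with the strict/non-strict mismatch between the two free boundary inequalities at a first touching time --- is exactly the standard argument for results of this type in the literature the paper builds on (Cao--Du--Li--Li, Du--Ni, Du--Wang--Zhao), and the architecture is sound, including the observation that $\underline{h}(t^*)=\bar{h}(t^*)$ forces $\underline{h}'(t^*)\ge\bar{h}'(t^*)$ while the boundary conditions give the reverse strict inequality.

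Two places need more care than you give them. First, your remainder term $R_1=d_1\int_{\underline{h}(t)}^{\bar{h}(t)}P_1(x-y)\bar u(t,y)\,{\rm d}y$ is nonnegative only if $\bar u\ge0$ on the slab $[\underline{h}(t),\bar{h}(t)]$, and the minimum-point argument at $x_\circ=\underline{h}(t_\circ)$ (where no differential inequality for $\underline{u}$ is assumed) needs $\bar u(t,\underline{h}(t))\ge 0\ge \underline{u}(t,\underline{h}(t))$; neither nonnegativity of $\bar u,\bar v$ away from $x=\bar h(t)$ is among the stated hypotheses, so you must either prove it first (a separate, easier minimum argument using $H(0)=G(0)=0$ and $H,G$ increasing) or note that in every application in this paper $(\bar u,\bar v)$ is the actual solution, which is known to be positive. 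Second, the differential inequalities hold only for $x\in[r(t),\cdot)$, so when you locate a first negative minimum of $(\tilde W,\tilde Z)$ you must also dispose of the possibility $x_\circ\in[s(t_\circ),r(t_\circ)]$ by invoking the hypothesis $(\underline{u},\underline{v})\le(\bar u,\bar v)$ on $[s(t),r(t)]$ --- you record that hypothesis but never actually use it. With these two points patched, the proof is complete.
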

 Using $\mathcal{R}_0>1$ and {\bf (H)}, we can find a constant $r>0$ depending only on $(a,b,H,G)$ such that
 \bes\label{2.3}
 (-a\eta u^*+H(\eta v^*), -b\eta v^*+G(\eta u^*))\ge \min\{\eta,1-\eta\}(r,r) ~ ~ {\rm for ~ all ~ }\eta\in[0,1].
 \ees

\begin{lemma}\label{l2.2}Let $(u,v,h)$ be the unique solution of \eqref{1.1} and the assumption \eqref{1.6} hold. Then the following statements are valid.
 \begin{enumerate}[$(1)$]
 \item There exists a positive constant $C$ depending only on the initial functions, parameters and kernel functions of \eqref{1.1} such that $\ln h(t)\ge C t^{\frac{1}{\beta}}\ln^{\frac{\alpha}{\beta}}t$ for all large $t$.
 \item For any $s(t)$ satisfying $\ln s(t)=o(t^{\frac{1}{\beta}}\ln^{\frac{\alpha}{\beta}}t)$, we have
 \[\liminf_{t\to\yy}(u(t,x),v(t,x))\ge (U(x),V(x)) ~ ~ {\rm uniformly ~ in ~ }[0,s(t)],\]
 where $(U(x),V(x))$ is the unique bounded positive solution of \eqref{1.4}.
 \end{enumerate}
 \end{lemma}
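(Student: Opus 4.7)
The plan is to prove part (1) by constructing a suitable lower solution $(\uud,\vvd,\ud h)$ on a time interval $[T_0,\yy)$ with spatial support $[l,\ud h(t)]$, where $\ln\ud h(t)\sim t^{1/\beta}\ln^{\alpha/\beta}t$, and invoking the comparison principle in Proposition \ref{p2.2}; part (2) will follow from a refinement in which the magnitude of the lower solution can be pushed close to $(u^*,v^*)$. The choice of $\ud h$ is motivated by the heuristic at the end of the introduction. Specifically, I would fix a large constant $l$ (bigger than the $k_0$ appearing in Proposition \ref{p2.1}), small $c>0$, large $\theta>0$, small $\eta\in(0,1)$, and an exponent $\rho\ge 1$, all to be chosen later, and set
\[
\ud h(t)=\exp\kk\{c(t+\theta)^{1/\beta}\ln^{\alpha/\beta}(t+\theta)\rr\},\qquad k_1(t)=\df{\ud h(t)-l}{2},
\]
\[
\xi(t,x)=\min\kk\{1,\,\Big(\df{\ud h(t)-x}{k_1(t)}\Big)^{\rho}\rr\},\quad \uud(t,x)=\eta u^*\xi(t,x),\ \vvd(t,x)=\eta v^*\xi(t,x).
\]

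To verify the hypotheses of Proposition \ref{p2.2}, I would check three groups of inequalities. First, on the overlap region $[l,k_0]$, where $\xi\equiv 1$, the bounds $\uud=\eta u^*\le u(t,\cdot)$ and $\vvd=\eta v^*\le v(t,\cdot)$ hold for all large $t$ by the local uniform convergence of $(u,v)$ to $(U,V)$ together with $U,V>0$, provided $\eta$ is small. Second, on $[k_0,\ud h(t))$, Proposition \ref{p2.1} controls the nonlocal diffusion via
\[
d_1\int_l^{\ud h(t)}J_1(x-y)\xi(t,y)\,\dy-d_1\xi\ge -\ep d_1\xi,
\]
and concavity of $H,G$ combined with \eqref{2.3} controls the reaction via $H(\eta v^*\xi)-a\eta u^*\xi\ge\xi\big(H(\eta v^*)-a\eta u^*\big)\ge\eta r\xi$; the time derivative $\uud_t=\eta u^*\xi_t$ is then absorbed because a direct computation gives $\xi_t\lesssim\rho\xi^{1-1/\rho}\ud h'(t)/\ud h(t)$ while $\ud h'(t)/\ud h(t)\to 0$ as $t\to\yy$ (using $\beta>1$). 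Third, the free boundary inequality follows from the kernel computations of Lemma \ref{l2.1} (now used as lower bounds): with $\xi\ge\tfrac12$ on $[l,\ud h(t)/2]$, one gets
\[
\int_l^{\ud h(t)}\!\!\!\int_{\ud h(t)}^{\yy}\!\!J_i(x-y)\uud(t,x)\,\dy\,\dx\;\gtrsim\;\eta u^*\cdot\df{\ud h(t)\ln^{\alpha}\ln\ud h(t)}{\ln^{\beta-1}\ud h(t)},
\]
which strictly dominates $\ud h'(t)\approx\ud h(t)\cdot\frac{c}{\beta}(t+\theta)^{1/\beta-1}\ln^{\alpha/\beta}(t+\theta)$ once $c$ is taken small. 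Taking $\theta$ so large that $\ud h(0)<h(T_0)$ and applying Proposition \ref{p2.2} to the time-shifted solution of \eqref{1.1} then yields $h(T_0+t)\ge\ud h(t)$ for all $t\ge 0$, which gives the claimed lower bound.

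For part (2), I would repeat the construction with $\eta=1-\delta$ close to $1$ for arbitrary small $\delta\in(0,\tfrac12)$, taking $l$ large enough that $U(l)>(1-\delta/2)u^*$ and $V(l)>(1-\delta/2)v^*$ (possible since $(U,V)(x)\to(u^*,v^*)$), so the boundary matching at $x=l$ still holds; note that \eqref{2.3} now gives the reaction bound $\delta r$ in place of $\eta r$, but this does not affect the argument. Then $\uud(t,x)=(1-\delta)u^*$ on $[l,\ud h(t)/2]$. The hypothesis $\ln s(t)=o(t^{1/\beta}\ln^{\alpha/\beta}t)=o(\ln\ud h(t))$ forces $s(t)=o(\ud h(t)^{\nu})$ for every $\nu>0$, so $s(t)\le\ud h(t)/2$ for all large $t$, and hence $u(t,x)\ge(1-\delta)u^*\ge U(x)-\delta u^*$ on $[l,s(t)]$ (and analogously for $v$). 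Combining with the local uniform convergence of $(u,v)$ to $(U,V)$ on $[0,l]$ and letting $\delta\to 0$ yields the desired $\liminf$ statement. The main obstacle I anticipate is the fine control of $\xi_t$ near $x=\ud h(t)$, where $\xi\to 0$ but the factor $\rho\xi^{1-1/\rho}$ in the bound $\xi_t\lesssim\rho\xi^{1-1/\rho}\ud h'/\ud h$ does not vanish as fast as $\xi$: the exponent $\rho$ must be chosen (or the shape of the cutoff modified so that its support ends slightly before $\ud h(t)$) to guarantee that the pointwise PDE inequality holds on a region large enough that the free boundary integral still produces the desired growth rate.
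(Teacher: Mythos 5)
Your overall architecture is the same as the paper's: a lower solution of the form $(\eta u^*\xi,\eta v^*\xi)$ with a polynomial cutoff $\xi$ travelling with a front $\ud h(t)=\exp\{c(t+\theta)^{1/\beta}\ln^{\alpha/\beta}(t+\theta)\}$, the nonlocal term controlled by Proposition \ref{p2.1}, the reaction controlled by concavity plus \eqref{2.3}, the free-boundary inequality from the tail integral of $J_i$, and the conclusion via Proposition \ref{p2.2}; your part (2) (pushing $\eta\to1$ and noting $s(t)=o(\ud h(t)^{\nu})$) is also essentially what the paper does. However, there is one genuine gap, precisely at the step you flag at the end: the absorption of $\uud_t$ near $x=\ud h(t)$. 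Your stated reason --- $\xi_t\lesssim\rho\,\xi^{1-1/\rho}\,\ud h'/\ud h$ with $\ud h'/\ud h\to0$ --- does not close the argument, because the only term of the differential inequality that scales like the solution itself is the reaction surplus, of size $O(\ep\xi)$, while $\xi^{1-1/\rho}/\xi=\xi^{-1/\rho}\to\yy$ as $x\to\ud h(t)$; moreover $\ud h'/\ud h\to0$ only in $t$, not uniformly enough to beat this blow-up in $x$. Choosing $\rho$ alone cannot fix this, since the mismatch between $\xi^{1-1/\rho}$ and $\xi$ persists for every finite $\rho$.

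The paper closes this gap by splitting the near-front zone at the threshold $x=\big(1-\ln^{-\frac{1}{\rho-1}}\ud h(t)\big)\ud h(t)$ and using two \emph{different} absorbing terms. In the outermost strip, it does not compare $\uud_t$ with $\ep\xi$ at all: instead it bounds the convolution $\int J_i(x-y)\zeta(t,y)\,{\rm d}y$ from below by an explicit, $\xi$-independent quantity of order $\ln^{\alpha}(t+\theta)/\ln^{\beta}\ud h(t)$ using the prescribed tail of $J_i$ (estimate \eqref{2.6}), and checks that $\uud_t$, which there carries the small factor $\big(\frac{\ud h-x}{\ud h}\big)^{\rho-1}\le\ln^{-1}\ud h(t)$, is dominated by $\frac{\tilde r\ep}{2}$ times that quantity once $l_1$ (your $c$) is small. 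In the intermediate strip, where $\big(\frac{\ud h-x}{\ud h}\big)^{-1}\le\ln^{\frac{1}{\rho-1}}\ud h(t)$, it uses $\uud_t\le\rho\big(\frac{\ud h-x}{\ud h}\big)^{-1}\frac{\ud h'}{\ud h}\uud$ together with the choice $1-\beta+\frac{1}{\rho-1}\le0$ (this is where $\rho$ large actually enters) so that $\uud_t\le\frac{\tilde r\ep}{3}\uud$, which the reaction surplus absorbs; see \eqref{2.8}. Without this two-regime argument --- in particular without the quantitative lower bound \eqref{2.6} on the convolution near the front, which is the only place the specific decay \eqref{1.6} of the kernel is used in the interior inequality --- the pointwise differential inequality is not verified, so as written your proof is incomplete at exactly the point you identified.
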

 \begin{proof}For any given $0<\ep\ll1$, according to \cite[Proposition 2.2]{LL1}, we can find a $X_{\ep}>0$ such that
\[(U(x),V(x))\ge (1-\frac{\sqrt{\ep}}{2})(u^*,v^*)~ ~ {\rm  for ~ }x\ge X_{\ep}.\]

 For positive constants $l_1\ll1$ and $\theta\gg1$ to be determined later, we define
 \[\underline{h}(t)=e^{l_1(t+\theta)^{\frac{1}{\beta}}\ln^{\frac{\alpha}{\beta}}(t+\theta)}, ~ \underline{u}(t,x)=u^*_{\ep}\zeta(t,x), ~ \underline{v}(t,x)=v^*_{\ep}\zeta(t,x) ~ ~ {\rm for ~ }(t,x)\in[0,\yy)\times[X_{\ep},\underline{h}(t)],\]
 where  $e^{l_1\theta^{\frac{1}{\beta}}\ln^{\frac{\alpha}{\beta}}\theta}\ge4$, $(u^*_{\ep},v^*_{\ep})=(1-\sqrt{\ep})(u^*,v^*)$, and
 \[\zeta(t,x):=\min\kk\{1,\;\left[8\frac{\underline{h}(t)-x}{\underline{h}(t)}\right]^{\rho}\rr\}\]
 with $\rho\gg1$ satisfying
 \[1-\frac{1}{\ln^{\frac{1}{\rho-1}}4}\ge\frac{7}{8}, ~ ~ 1-\beta+\frac{1}{\rho-1}\le0.\]
Assume $\ln \theta>|\alpha|$. Thus $\ud h(t)$ is strictly increasing for $t\ge0$.  Making use of Proposition \ref{p2.1} with $l=X_{\ep}$, we have that there exists a large $k_0>0$ such that if $e^{l_1\theta^{\frac{1}{\beta}}\ln^{\frac{\alpha}{\beta}}\theta}>8k_0$, then
 \bes\label{2.4}
 \int_{X_{\ep}}^{\underline{h}(t)}J_i(x-y)\zeta(t,y)\dy\ge(1-\ep^2)\zeta(t,x) ~ ~ {\rm for ~ }t>0, ~ x\in[k_{0},\underline{h}(t)], ~ i=1,2.
 \ees

Next we will prove that by choosing $l_1$, $\theta$ and $T$ suitably, there holds:
  \bes\label{2.5}
\left\{\begin{aligned}
&\underline u_t\le d_1\displaystyle\int_{X_{\ep}}^{\underline h(t)}J_1(x-y)\underline u(t,y){\rm d}y-d_1\underline u-a\underline u+H(\ud v),\\
&\hspace{50mm}t>0,~x\in[k_0,\underline h(t))\setminus\left\{\frac{7\underline{h}(t)}8\right\},\\
&\underline v_t\le d_2\displaystyle\int_{X_{\ep}}^{\underline h(t)}J_2(x-y)\underline v(t,y){\rm d}y-d_2\underline v-b\underline v+G(\ud u),\\
&\hspace{50mm} t>0,~x\in[k_0,\underline h(t))\setminus\left\{\frac{7\underline{h}(t)}8\right\},\\
&\underline u(t,\underline h(t))=\ud v(t,\ud h(t))=0,&& t>0,\\
&\underline h'(t)<\displaystyle\int_{X_{\ep}}^{\underline h(t)}\int_{\underline h(t)}^{\infty}
\big(\mu_1J_1(x-y)\underline u(t,x)+\mu_2J_2(x-y)\ud v(t,x)\big){\rm d}y{\rm d}x,&& t>0,\\
&\underline{u}(t,x)\le u(t+T,x), ~ \ud v(t,x)\le v(t+T,x), && t>0, ~ x\in[X_{\ep},k_0],\\
&\underline h(0)<h(T),\;\;\underline u(0,x)\le u(T,x),~ \ud v(0,x)\le v(T,x), && x\in[X_{\ep},\underline h(0)].
\end{aligned}\right.
\ees
Once we have proved \eqref{2.5}, by virtue of a comparison argument (Proposition \ref{p2.2} with $(s(t),r(t))=(X_{\ep},k_0)$), we derive
\[h(t+T)\ge \underline{h}(t) ~ ~ {\rm and ~ ~ }(u(t+T,x),v(t+T))\ge (\underline u(t,x),\ud v(t,x)) ~ ~ {\rm for ~ }t\ge0, ~ x\in[X_{\ep},\underline{h}(t)],\]
which obviously yields the assertion (1). As for the assertion (2), we have
\bess
&&\max_{x\in[X_{\ep},s(t)]}|\underline{u}(t,x)-U(x)|\\
 &&\le u^*(1-\sqrt{\ep})\kk(1-\min\kk\{1,\;
\left[8\frac{\underline{h}(t)-s(t)}{\underline{h}(t)}\right]^{\rho}\rr\}\rr)
+\frac{3\sqrt{\ep}u^*}{2}\\
&&=u^*(1-\sqrt{\ep})\left[1-\min\kk\{1,\;
8^{\rho}\left[1-\frac{e^{o_{t\to\yy}(1)t^{\frac{1}{\beta}}\ln^{\frac{\alpha}{\beta}}t} }{\underline{h}(t)}\right]^{\rho}\rr\}\right]
+\frac{3\sqrt{\ep}u^*}{2}\to \frac{3\sqrt{\ep}u^*}{2} {\rm ~ as ~ }t\to\yy.
\eess
Thus there exists a $T_1>T$ such that
\[\underline{u}(t,x)\ge U(x)-2\sqrt{\ep}u^* ~ ~ {\rm for ~ }t\ge T_1, ~ x\in[X_{\ep},s(t)].\]
Recall that $u(t+T,x)\ge \underline u(t,x)$ for $t\ge0$ and $x\in[X_{\ep},\underline{h}(t)]$. We can conclude that
\[u(t,x)\ge U(x)-2\sqrt{\ep}u^* ~ ~ {\rm  for ~ }t\ge T_1+T, ~  x\in[X_{\ep},s(t)].\]
 Moreover, since $u(t,x)\to U(x)$ locally uniformly in $[0,\yy)$ as $t\to\yy$, one can choose a $T_2>T_1+T$ such that
 \[u(t,x)\ge U(x)-2\sqrt{\ep}u^*~ ~ {\rm for ~ }t\ge T_2, ~ x\in[0,X_{\ep}].\]
  Thus for any small $\ep>0$, we have
   \[u(t,x)\ge U(x)-2\sqrt{\ep}u^* ~ ~ {\rm for ~ }t\ge T_2, ~ x\in[0,s(t)],\]
    which immediately implies $\liminf_{t\to\yy}u(t,x)\ge U(x)$ uniformly in $[0,s(t)]$. Analogously, we can show $\liminf_{t\to\yy}v(t,x)\ge V(x)$ uniformly in $[0,s(t)]$. Thus the assertion (2) is obtained.

Now it remains to verify \eqref{2.5}. Firstly, for $x\in[7\ud h(t)/8,\ud h(t)]$, if $e^{l_1\theta^{\frac{1}{\beta}}\ln^{\frac{\alpha}{\beta}}\theta}\gg1$, we have
\bes\label{2.6}
\int_{X_{\ep}}^{\ud h(t)}J_i(x-y)\zeta(t,y)\dy&\ge&\int_{X_{\ep}}^{\frac{7\ud h(t)}{8}}J_i(x-y)\zeta(t,y)\dy=\int_{X_{\ep}}^{\frac{7\ud h(t)}{8}}J_i(x-y)\dy\nonumber\\
&=&\int_{X_{\ep}-x}^{\frac{7\ud h(t)}{8}-x}J_i(y)\dy\nonumber\\
&\ge&\int_{\frac{\ud h(t)}{8}}^{\frac{\ud h(t)}{4}}J_i(y)\dy\nonumber\\
&\ge&\varsigma_1\int_{\frac{\ud h(t)}{8}}^{\frac{\ud h(t)}{4}}\frac{\ln^{\alpha}\ln y}{y\ln^{\beta}y}{\rm d}y\dy\nonumber\\
&=&\frac{\varsigma_1}{\beta-1}\left[\frac{\ln^{\alpha}\ln\frac{\ud h(t)}{8}}{\ln^{\beta-1}\frac{\ud h(t)}{8}}-\frac{\ln^{\alpha}\ln\frac{\ud h(t)}{4}}{\ln^{\beta-1}\frac{\ud h(t)}{4}}\right]+\frac{\varsigma_1\alpha}{\beta-1}\int_{\ln\frac{\ud h(t)}{8}}^{\ln\frac{\ud h(t)}{4}}\frac{\ln^{\alpha-1}y}{y^{\beta}}\dy\nonumber\\
&\ge&\frac{\varsigma_1\ln2\ln^{\alpha}\ln\ud h(t)}{2\ln^{\beta}\ud h(t)}+\frac{\varsigma_1\alpha}{\beta-1}\int_{\ln\frac{\ud h(t)}{8}}^{\ln\frac{\ud h(t)}{4}}\frac{\ln^{\alpha-1}y}{y^{\beta}}\dy\nonumber\\
&=&\frac{\ln^{\alpha}\ln\ud h(t)}{\ln^{\beta}\ud h(t)}\left(\frac{\varsigma_1\ln2}{2}+o_{\ud h\to\yy}(1)\right)\nonumber\\
&\ge&\frac{\varsigma_1\ln2\ln^{\alpha}\ln\ud h(t)}{4\ln^{\beta}\ud h(t)}\nonumber\\
&\ge&\frac{\varsigma_1\ln2\min\{1,(\frac{1}{\beta})^{\alpha}\}\ln^{\alpha}(t+\theta)}{4\ln^{\beta}\ud h(t)},
\ees
where we have used that
\bess
 \frac{\ln^{\alpha}\ln\frac{\ud h(t)}{8}}{\ln^{\beta-1}\frac{\ud h(t)}{8}}-\frac{\ln^{\alpha}\ln\frac{\ud h(t)}{4}}{\ln^{\beta-1}\frac{\ud h(t)}{4}}=\frac{\ln^{\alpha}\ln\ud h(t)}{\ln^{\beta}\ud h(t)}\left[(\beta-1)\ln2 +o(1)\right],
 \eess
 and $\ln^{\alpha}\ln\ud h(t)\ge\min\{1,(\frac{1}{\beta})^{\alpha}\}\ln^{\alpha}(t+\theta)$.
 From \eqref{2.3}, it follows that
\bes\label{2.7}
(-a\ud u+H(\ud v),-b\ud v+G(\ud u))\ge\tilde{r}\ep(\ud u,\ud v) \;~ ~ {\rm for ~  }x\in[k_0,\ud h(t)], ~ {\rm if } ~ \ep ~ {\rm is} ~ {\rm small} ~ {\rm enough},
 \ees
 where $\tilde{r}$ depends only on $a$, $b$, $H$ and $G$.

Hence, for $x\in[k_0,\underline{h}(t)]$, using \eqref{2.4} and \eqref{2.7}, if $\ep$ is suitably small, we have
\bes\label{2.8}
 &&d_1\displaystyle\int_{X_{\ep}}^{\underline h(t)}J_1(x-y)\underline u(t,y){\rm d}y-d_1\underline u-a\ud u+H(\ud v)\nonumber\\
&\ge&\left[\frac{\tilde{r}\ep}{2}+\left(d_1-\frac{\tilde{r}\ep}2\right)\right]
\int_{X_{\ep}}^{\underline h(t)}J_1(x-y)\underline u(t,y){\rm d}y-\left(d_1-\tilde{r}\ep\right)\underline u\nonumber\\
&\ge&\frac{\tilde{r}\ep}{2}\int_{X_{\ep}}^{\underline h(t)}J_1(x-y)\underline u(t,y){\rm d}y+\left(d_1-\frac{\tilde{r}\ep}2\right)(1-\ep^2)\underline{u}
-\left(d_1-\tilde{r}\ep\right)\underline u\nonumber\\
&\ge&\frac{\tilde{r}\ep}{2}\int_{X_{\ep}}^{\underline h(t)}J_1(x-y)\underline u(t,y){\rm d}y+\frac{\tilde{r}\ep}{3}\ud u +\left(d_1-\frac{\tilde{r}\ep}2\right)(1-\ep^2)\underline{u}
-\left(d_1-\frac{2\tilde{r}\ep}{3}\right)\underline u\nonumber\\
&\ge&\frac{\tilde{r}\ep}{2}\int_{X_{\ep}}^{\underline h(t)}J_1(x-y)\underline u(t,y){\rm d}y+\frac{\tilde{r}\ep}{3}\ud u.
\ees
On the other hand, we have $\underline{u}_t(t,x)=0$ for $t>0$ and $x\in[k_0,{7\underline{h}(t)}/8]$. Due to \eqref{2.8}, the first inequality of \eqref{2.5} holds when $t>0$ and $x\in[k_0,{7\underline{h}(t)}/8)$. For $t>0$ and
\[x\in\left[\left(1-\frac{1}{\ln^{\frac{1}{\rho-1}}\ud h(t)}\right)\underline{h}(t),\ud h(t)\right],\]
in view of \eqref{2.6} and \eqref{2.8}, we have
\bess
\underline{u}_t(t,x)&=&u^*_{\ep}8^{\rho}\rho\left(\frac{\ud h(t)-x}{\ud h(t)}\right)^{\rho-1}\frac{x\ud h'(t)}{\ud h^2(t)}\\
&\le&u^*_{\ep}8^{\rho}\rho\left(\frac{\ud h(t)-x}{\ud h(t)}\right)^{\rho-1}\frac{\ud h'(t)}{\ud h(t)}\\
&=&\frac{u^*_{\ep}8^{\rho}\rho l_1}{\beta}\left(\frac{\ud h(t)-x}{\ud h(t)}\right)^{\rho-1}(t+\theta)^{\frac{1-\beta}{\beta}}\ln^{\frac{\alpha}{\beta}}(t+\theta)\left[1+\alpha\ln^{-1}(t+\theta)\right]\\
&\le&\frac{2u^*_{\ep}8^{\rho}\rho l_1}{\beta}\left(\frac{\ud h(t)-x}{\ud h(t)}\right)^{\rho-1}(t+\theta)^{\frac{1-\beta}{\beta}}\ln^{\frac{\alpha}{\beta}}(t+\theta)\\
&=&\frac{2u^*_{\ep}8^{\rho}\rho l^{\beta}_1}{\beta}\left(\frac{\ud h(t)-x}{\ud h(t)}\right)^{\rho-1}\ln^{1-\beta}\ud h(t)\ln^{\alpha}(t+\theta)\\
&\le&\frac{2u^*_{\ep}8^{\rho}\rho l^{\beta}_1}{\beta}\frac{\ln^{\alpha}(t+\theta)}{\ln^{\beta}\ud h(t)}\\
&\le&\frac{u^*_{\ep}\tilde{r}\ep\varsigma_1\ln2\min\{1,(\frac{1}{\beta})^{\alpha}\}\ln^{\alpha}(t+\theta)}{8\ln^{\beta}\ud h(t)}
\eess
provided that $l_1$ is suitably small such that
\[\frac{2u^*_{\ep}8^{\rho}\rho l^{\beta}_1}{\beta}\le\frac{u^*_{\ep}\tilde{r}\ep\varsigma_1\ln2\min\{1,(\frac{1}{\beta})^{\alpha}\}}{8}.\]
 For $t>0$ and
\[x\in[\frac{7\ud h(t)}{8},\left(1-\frac{1}{\ln^{\frac{1}{\rho-1}}\ud h(t)}\right)\underline{h}(t)],\]
using \eqref{2.8} yields
\bess
\underline{u}_t(t,x)&\le&\rho(\frac{\ud h(t)-x}{\ud h(t)})^{-1}\frac{\ud h'(t)}{\ud h(t)}\ud u\\
&\le&\frac{2\rho l^{\beta}_1}{\beta}\ln^{1-\beta+\frac{1}{\rho-1}}\ud h(t)\ln^{\alpha}(t+\theta)\ud u\\
&\le&\frac{2\rho l^{1+\frac{1}{\rho-1}}_1}{\beta}\ud u\le\frac{\tilde{r}\ep}{3}\ud u
\eess
if $l_1$ is small enough such that
\[\frac{2\rho l^{1+\frac{1}{\rho-1}}_1}{\beta}\le\frac{\tilde{r}\ep}{3}.\]
 Hence the first inequality of \eqref{2.5} holds with $l_1$, $\theta$ and $\ep$ chosen as above. Analogously, we can show the inequality in the third line of \eqref{2.5} is valid if $l_1$, $\theta$ and $\ep$ are chosen suitably.

Then we prove the inequality in the six line of \eqref{2.5} holds. Straightforward computations show that if $e^{l_1\theta^{\frac{1}{\beta}}\ln^{\frac{\alpha}{\beta}}\theta}$ is suitably large, then
\bess
&&\displaystyle\int_{X_{\ep}}^{\underline h(t)}\int_{\underline h(t)}^{\infty}
\big(\mu_1J_1(x-y)\underline u(t,x)+\mu_2J_2(x-y)\ud v(t,x)\big){\rm d}y{\rm d}x\\
&&\ge\int_{X_{\ep}}^{\frac{7\underline h(t)}{8}}\int_{\underline h(t)}^{\infty}
\big(\mu_1u^*_{\ep}J_1(x-y)+\mu_2v^*_{\ep}J_2(x-y)\big){\rm d}y{\rm d}x\\
&&=\int_{X_{\ep}-\ud h(t)}^{\frac{-\ud h(t)}{8}}\int_{0}^{\infty}
\big(\mu_1u^*_{\ep}J_1(x-y)+\mu_2v^*_{\ep}J_2(x-y)\big){\rm d}y{\rm d}x\\
&&=\left\{\int_{\frac{\ud h(t)}{8}}^{\ud h(t)-X_{\ep}}\int_{\frac{\ud h(t)}{8}}^{y}+\int_{\ud h(t)-X_{\ep}}^{\yy}\int_{\frac{\ud h(t)}{8}}^{\ud h(t)-X_{\ep}}\right\}
\big(\mu_1u^*_{\ep}J_1(y)+\mu_2v^*_{\ep}J_2(y)\big){\rm d}x{\rm d}y\\
&&\ge\int_{\ud h(t)-X_{\ep}}^{\yy}\int_{\frac{\ud h(t)}{8}}^{\ud h(t)-X_{\ep}}
\big(\mu_1u^*_{\ep}J_1(y)+\mu_2v^*_{\ep}J_2(y)\big){\rm d}x{\rm d}y\\
&&\ge\frac{\ud h(t)}{2}\int_{\ud h(t)-X_{\ep}}^{\yy}
\big(\mu_1u^*_{\ep}J_1(y)+\mu_2v^*_{\ep}J_2(y)\big){\rm d}y\\
&&\ge C_1\ud h(t)\int_{\ud h(t)}^{\yy}
\frac{\ln^{\alpha}\ln y}{y\ln^{\beta}y}{\rm d}y\\
&&=\frac{C_1\ud h(t)\ln^{\alpha}\ln \ud h(t)}{(\beta-1)\ln^{\beta-1}\ud h(t)}+\frac{\alpha C_1\ud h(t)}{\beta-1}\int_{\ln \ud h(t)}^{\yy}y^{-\beta}\ln^{\alpha-1}y\dy\\
&&=\frac{\ud h(t)\ln^{\alpha}\ln \ud h(t)}{\ln^{\beta-1}\ud h(t)}(\frac{C_1}{\beta-1}+o_{\ud h\to\yy}(1))\\
&&\ge\frac{C_1\ud h(t)\ln^{\alpha}\ln \ud h(t)}{2(\beta-1)\ln^{\beta-1}\ud h(t)}\\
&&\ge\frac{C_1\min\{1,(\frac{1}{\beta})^{\alpha}\}\ud h(t)\ln^{\alpha}(t+\theta)}{2(\beta-1)\ln^{\beta-1}\ud h(t)},
\eess
where $C_1$ depends only on $a$, $b$, $H$, $G$, $\mu_i$ and $J_i$ with $i=1,2$.
 On the other hand, simple computations arrive at
\bess
\underline{h}'(t)&=&\frac{\ud h(t)\ln \ud h(t)}{\beta(t+\theta)}\left[1+\alpha\ln^{-1}(t+\theta)\right]\\
&\le&\frac{2\ud h(t)\ln \ud h(t)}{\beta(t+\theta)}\\
&=&\frac{2l^{\beta}_1\ud h(t)\ln^{\alpha}(t+\theta)}{\beta\ln^{\beta-1}\ud h(t)}\\
&\le&\frac{C_1\min\{1,(\frac{1}{\beta})^{\alpha}\}\ud h(t)\ln^{\alpha}(t+\theta)}{2(\beta-1)\ln^{\beta-1}\ud h(t)}
\eess
provided that $l_1$ is suitably small such that
\[\frac{2l^{\beta}_1}{\beta}\le \frac{C_1\min\{1,(\frac{1}{\beta})^{\alpha}\}}{2(\beta-1)}.\]
Thus the inequality in the fifth line of \eqref{2.5} holds.

Now to prove \eqref{2.5}, it remains to show the inequalities in the last two lines of \eqref{2.5}.  For positive constants $\ep,\;\theta$ and $l_1$ chosen as above, since spreading happens for \eqref{1.1}, we can find a $T>0$ such that $h(T)>e^{l_1\theta^{\frac{1}{\beta}}\ln^{\frac{\alpha}{\beta}}\theta}=\underline{h}(0)$ and
\[(u(t,x),v(t,x))\ge (1-\frac{\sqrt{\ep}}{2})(U(x),V(x)) ~ ~ {\rm  for ~ }t\ge T, ~ x\in[X_{\ep}, \underline{h}(0)].\]
 Recalling $(U(x),V(x))\ge (1-\frac{\sqrt{\ep}}{2})(u^*,v^*)$  for $x\ge X_{\ep}$, we have that when $t\ge T$ and $x\in[X_{\ep}, \underline{h}(0)]$,
\[(u(t,x),v(t,x))\ge (1-\frac{\sqrt{\ep}}{2})(U(x),V(x))\ge (1-\sqrt{\ep})(u^*,v^*)\ge (\underline{u}(t,x),\ud v(t,x)),\]
which immediately implies that $(u(t+T,x),v(t+T,x))\ge(\underline{u}(t,x),\ud v(t,x))$ for $t\ge0$ and $x\in[X_{\ep},\underline{h}(0)]$. Thus \eqref{2.5} holds. The proof is complete.
 \end{proof}

 Next we consider the case where \eqref{1.7} holds.

 \begin{lemma}\label{l2.3}Let $(u,v,h)$ be the unique solution of \eqref{1.1} and the assumption \eqref{1.7} hold. Then the following statements are valid.
 \begin{enumerate}[$(1)$]
 \item There exists a positive constant $C$ such that $h(t)\ge C t^{\frac{1}{\gamma-1}}\ln^{\frac{\beta}{\gamma-1}}t$ for all large $t$.
 \item For any $s(t)=o(t^{\frac{1}{\gamma-1}}\ln^{\frac{\beta}{\gamma-1}}t)$, we have
 \[\liminf_{t\to\yy}(u(t,x),v(t,x))\ge (U(x),V(x)) ~ ~ {\rm uniformly ~ in ~ }[0,s(t)].\]
 \end{enumerate}
 \end{lemma}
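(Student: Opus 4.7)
The plan is to mirror the lower-solution construction carried out in Lemma \ref{l2.2}, adapted to the algebraic--logarithmic decay \eqref{1.7}. I would set
\[\ud h(t)=l_1(t+\theta)^{\frac{1}{\gamma-1}}\ln^{\frac{\beta}{\gamma-1}}(t+\theta),\qquad \ud u(t,x)=u^*_{\ep}\zeta(t,x),\qquad \ud v(t,x)=v^*_{\ep}\zeta(t,x)\]
on $[0,\infty)\times[X_{\ep},\ud h(t)]$, with the same cutoff $\zeta(t,x)=\min\{1,[8(\ud h(t)-x)/\ud h(t)]^{\rho}\}$ and the same choices of $\rho\gg1$, $\ep\ll1$ and $X_{\ep}$ as in Lemma \ref{l2.2}. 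The parameters $l_1\ll1$ and $\theta\gg1$ are to be pinned down by the verification. Once the analog of the inequalities in \eqref{2.5} is established, a direct application of Proposition \ref{p2.2} delivers $h(t+T)\ge \ud h(t)$ and $(u(t+T,x),v(t+T,x))\ge(\ud u(t,x),\ud v(t,x))$, from which assertion (1) is immediate, and assertion (2) follows from $\zeta(t,s(t))\to 1$ whenever $s(t)=o(t^{1/(\gamma-1)}\ln^{\beta/(\gamma-1)}t)$, combined with the local-uniform convergence $u(t,\cdot)\to U$, $v(t,\cdot)\to V$ on $[0,X_{\ep}]$, exactly as in the closing paragraph of the proof of Lemma \ref{l2.2}.

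The two quantitative inputs are the asymptotic estimates
\[\int_{\ud h(t)/8}^{\ud h(t)/4}\frac{\ln^{\beta}y}{y^{\gamma}}\,\dy\ \sim\ c_{\gamma,\beta}\,\ud h(t)^{1-\gamma}\ln^{\beta}\ud h(t),\qquad \int_{\ud h(t)}^{\infty}\frac{\ln^{\beta}y}{y^{\gamma}}\,\dy\ \sim\ \tilde c_{\gamma,\beta}\,\ud h(t)^{1-\gamma}\ln^{\beta}\ud h(t),\]
both obtained by integration by parts in the spirit of \eqref{2.6}. The first, together with Proposition \ref{p2.1} and \eqref{2.3}, reproduces the pointwise bound \eqref{2.8} on $[k_0,\ud h(t)]$. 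The second yields
\[\int_{X_{\ep}}^{\ud h(t)}\!\!\int_{\ud h(t)}^{\infty}\!\bigl(\mu_1J_1(x-y)\ud u+\mu_2J_2(x-y)\ud v\bigr)\,\dy\,\dx\ \ge\ C_1\,\ud h(t)^{2-\gamma}\ln^{\beta}\ud h(t),\]
against which the computation $\ud h'(t)\le \frac{2l_1}{\gamma-1}(t+\theta)^{(2-\gamma)/(\gamma-1)}\ln^{\beta/(\gamma-1)}(t+\theta)\sim l_1^{\gamma-1}(\gamma-1)^{\beta-1}\,\ud h(t)^{2-\gamma}\ln^{\beta}\ud h(t)$ is compared, closing the free-boundary inequality by shrinking $l_1$. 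For the $\ud u_t$ inequality I would copy the three-region split of Lemma \ref{l2.2}: on $[k_0,7\ud h(t)/8)$ it is trivial; on the middle band the factor $((\ud h(t)-x)/\ud h(t))^{\rho-1}$ arising in $\ud u_t$ matches the one extracted from the lower bound, and the residual scalar $\ud h'(t)\ud h(t)^{\gamma-2}\ln^{-\beta}\ud h(t)\sim l_1^{\gamma-1}(\gamma-1)^{\beta-1}$ is made small by shrinking $l_1$; on the outer band $[(1-\ln^{-1/(\rho-1)}\ud h(t))\ud h(t),\ud h(t))$ the bound $((\ud h(t)-x)/\ud h(t))^{-1}\le \ln^{1/(\rho-1)}\ud h(t)$ together with the constraint $1-\gamma+\frac{1}{\rho-1}\le 0$ (automatic for $\rho$ large, since $\gamma>1$) absorbs $\ud u_t$ into the $\tilde r\ep\ud u/3$ term. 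The side-boundary and initial-data inequalities are then arranged by enlarging $\theta$ and choosing a suitable $T>0$, exactly as in the last paragraph of Lemma \ref{l2.2}.

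The main technical obstacle is the simultaneous matching of powers of $\ud h(t)$ and of $\ln(t+\theta)$ in the middle-band bound for $\ud u_t$, in the outer-band bound for $\ud u_t$, and in the free-boundary inequality; it is precisely this triple matching that forces both the polynomial rate $(t+\theta)^{1/(\gamma-1)}$ and the logarithmic correction $\ln^{\beta/(\gamma-1)}(t+\theta)$ in $\ud h(t)$. Once the smallness of $l_1$ is fixed to meet the quantitative conditions produced by all three estimates, the remainder of the verification is a routine transcription of the argument in Lemma \ref{l2.2}.
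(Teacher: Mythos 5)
Your proposal is correct and follows essentially the same route as the paper: the same $\ud h(t)=l_1(t+\theta)^{\frac{1}{\gamma-1}}\ln^{\frac{\beta}{\gamma-1}}(t+\theta)$, the same use of Proposition \ref{p2.1} and of the two integral asymptotics for $\int_{\ud h/8}^{\ud h/4}y^{-\gamma}\ln^{\beta}y\,\dy$ and the tail integral, and the same comparison via Proposition \ref{p2.2}. The only deviation is that the paper swaps the power cutoff $[8(\ud h-x)/\ud h]^{\rho}$ of Lemma \ref{l2.2} for the simpler linear profile $\hat\zeta=\min\{1,\,2(\ud h(t)-x)/\ud h(t)\}$, which reduces your three-band verification of the $\ud u_t$ inequality to a two-region one; your $\rho\gg1$ version also closes, just with a little more bookkeeping.
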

 \begin{proof}
For any small $0<\ep\ll1$, let $X_{\ep}$ be defined as in the proof of Lemma \ref{l2.2}.
 For positive constants $l_1\ll1$ and $\theta\gg1$ to be determined later, we define
 \[\underline{h}(t)=l_1(t+\theta)^{\frac{1}{\gamma-1}}\ln^{\frac{\beta}{\gamma-1}}(t+\theta), ~ \underline{u}(t,x)=u^*_{\ep}\hat\zeta(t,x), ~ \underline{v}(t,x)=v^*_{\ep}\hat\zeta(t,x),\]
 where $(t,x)\in[0,\yy)\times[X_{\ep},\underline{h}(t)]$, $(u^*_{\ep},v^*_{\ep})$ is the same with that of Lemma \ref{l2.2}, and
 \[\hat{\zeta}(t,x)=\min\left\{1,2\frac{\ud h(t)-x}{\ud h(t)}\right\}.\]
In light of Proposition \ref{p2.1} with $l=X_{\ep}$, we have that there exists a large $k_0>0$ such that if $l_1\theta^{\frac{1}{\gamma-1}}\ln^{\frac{\beta}{\gamma-1}}\theta>4k_0$, then
 \bes\label{2.9}
 \int_{X_{\ep}}^{\underline{h}(t)}J_i(x-y)\hat\zeta(t,y)\dy\ge(1-\ep^2)\hat\zeta(t,x) ~ ~ {\rm for ~ }t>0, ~ x\in[k_{0},\underline{h}(t)], ~ i=1,2.
 \ees
Setting $\beta+\ln \theta>0$ ensures that $\ud h(t)$ is strictly increasing for $t\ge0$. Next we prove that by choosing $l_1$, $\theta$ and $T$ properly, \eqref{2.5} holds but with
\[x\in[k_0,\underline h(t))\setminus\left\{\frac{7\underline{h}(t)}8\right\}\]
replaced by
\[x\in[k_0,\underline h(t))\setminus\left\{\frac{\underline{h}(t)}2\right\}\]
in the first two inequalities. Once we have done it, arguing as in the proof of Lemma \ref{l2.2}, we can get the desired results.

Now it remains to verify \eqref{2.5}. By the definition of $(\underline{u},\ud v, \underline{h})$, the equalities in the fifth line of \eqref{2.5} hold.
Now we prove the inequalities in the first two lines of \eqref{2.5}.
 For $x\in[\ud h(t)/4,\ud h(t)]$, if $l_1\theta^{\frac{1}{\gamma-1}}\ln^{\frac{\beta}{\gamma-1}}\theta$ is suitably large, with the aid of \eqref{1.7} and
 \[\hat{\zeta}(t,x)\ge\frac{\ud h(t)-x}{\ud h(t)},\]
 we have
\bes\label{2.10}
&&\int_{X_{\ep}}^{\underline h(t)}J_i(x-y)\hat\zeta(t,y){\rm d}y\nonumber\\
&=&\int_{X_{\ep}-x}^{\underline h(t)-x}J_i(y)\hat\zeta(t,x+y){\rm d}y\nonumber\\
  &\ge&\int_{X_{\ep}-\frac{\underline{h}(t)}{4}}^{-\frac{\underline{h}(t)}{8}}J_i(y)\hat\zeta(t,x+y){\rm d}y\nonumber\\
  &\ge& \varsigma_1\int_{-\frac{\underline{h}(t)}{6}}^{-\frac{\underline{h}(t)}{8}}
 \frac{\ln^{\beta}|y|}{|y|^{\gamma}}\frac{\underline{h}(t)-x-y}{\underline{h}(t)}{\rm d}y\nonumber\\
&\ge&\frac{\varsigma_1}{\underline{h}(t)}\int_{-\frac{\underline{h}(t)}{6}}^{-\frac{\underline{h}(t)}{8}}
\frac{\ln^{\beta}|y|}{|y|^{\gamma}}(-y){\rm d}y\nonumber\\
&=&\frac{\varsigma_1}{\underline{h}(t)}\int_{\frac{\underline{h}(t)}{8}}^{\frac{\underline{h}(t)}{6}}
 y^{1-\gamma}\ln^{\beta}y{\rm d}y\nonumber\\
 &=&\frac{\varsigma_1}{\underline{h}(t)}\left[\frac{y^{2-\gamma}}{2-\gamma}\ln^{\beta}y\bigg|^{\frac{\ud h(t)}{6}}_{\frac{\ud h(t)}{8}}-\frac{\beta}{2-\gamma}\int_{\frac{\ud h(t)}{8}}^{\frac{\ud h(t)}{6}}y^{1-\gamma}\ln^{\beta-1}y{\rm d}y\right]\nonumber\\
 &=&\frac{\varsigma_1\ud h^{1-\gamma}(t)}{2-\gamma}\left[\frac{(\ln h(t)-\ln 6)^{\beta}}{6^{2-\gamma}}-\frac{(\ln h(t)-\ln 8)^{\beta}}{8^{2-\gamma}}\right]-\frac{\varsigma_1\beta}{(2-\gamma)\ud h(t)}\int_{\frac{\ud h(t)}{8}}^{\frac{\ud h(t)}{6}}y^{1-\gamma}\ln^{\beta-1}y{\rm d}y\nonumber\\
 &\ge&\frac{\varsigma_1\ud h^{1-\gamma}(t)\ln^{\beta}\ud h(t)}{2(2-\gamma)}(\frac{1}{6^{2-\gamma}}-\frac{1}{8^{2-\gamma}})-\frac{\varsigma_1\beta}{(2-\gamma)\ud h(t)}\int_{\frac{\ud h(t)}{8}}^{\frac{\ud h(t)}{6}}y^{1-\gamma}\ln^{\beta-1}y{\rm d}y\nonumber\\
 &=&\ud h^{1-\gamma}(t)\ln^{\beta}\ud h(t)\left[\frac{\varsigma_1}{2(2-\gamma)}(\frac{1}{6^{2-\gamma}}-\frac{1}{8^{2-\gamma}})+o_{\ud h\to\yy}(1)\right]\nonumber\\
&\ge& \frac{\varsigma_1}{4(2-\gamma)}(\frac{1}{6^{2-\gamma}}-\frac{1}{8^{2-\gamma}})\ud h^{1-\gamma}(t)\ln^{\beta}\ud h(t)\nonumber\\
&=&\frac{\varsigma_1\ep}{4(2-\gamma)}(\frac{1}{6^{2-\gamma}}-\frac{1}{8^{2-\gamma}})l^{1-\gamma}_1(t+\theta)^{-1}\ln^{-\beta}(t+\theta)\ln^{\beta}[l_1(t+\theta)^{\frac{1}{\gamma-1}}\ln^{\frac{\beta}{\gamma-1}}(t+\theta)]\nonumber\\
&=&\frac{\varsigma_1\ep l^{1-\gamma}_1}{4(2-\gamma)}(\frac{1}{6^{2-\gamma}}-\frac{1}{8^{2-\gamma}})\frac{(t+\theta)^{-1}}{\ln^{\beta}(t+\theta)}\left[\ln l_1+\frac{1}{\gamma-1}\ln(t+\theta)+\frac{\beta}{\gamma-1}\ln\ln(t+\theta)\right]^{\beta}\nonumber\\
&\ge&\frac{\varsigma_1\ep}{4(2-\gamma)(\gamma-1)^{\beta}}(\frac{1}{6^{2-\gamma}}-\frac{1}{8^{2-\gamma}})l^{1-\gamma}_1\min\{\frac{1}{2^{\beta}},1\}(t+\theta)^{-1},
\ees
where $l_1\theta^{\frac{1}{\gamma-1}}\ln^{\frac{\beta}{\gamma-1}}\theta$ is large enough such that
\[\left[\frac{(\ln h(t)-\ln 6)^{\beta}}{6^{2-\gamma}}-\frac{(\ln h(t)-\ln 8)^{\beta}}{8^{2-\gamma}}\right]\ge\frac{\ln^{\beta}\ud h(t)}{2}(\frac{1}{6^{2-\gamma}}-\frac{1}{8^{2-\gamma}}).\]
Clearly, using \eqref{2.9} and arguing as in the proof of Lemma \ref{l2.2}, we have that if $\ep$ is suitably small, for $x\in[k_0,\underline{h}(t)]$, \eqref{2.8} is also valid here.

On the other hand, we have $\underline{u}_t(t,x)=0$ for $t>0$ and $x\in[0,{\underline{h}(t)}/2]$. Combining with \eqref{2.8}, the first inequality of \eqref{2.8} holds for $x\in[k_0,\ud h(t)/2)$. For $t>0$ and $x\in(\underline{h}(t)/2,\underline{h}(t))$, due to \eqref{2.10}, we have
\bess
\underline{u}_t(t,x)&=&2 u^*_{\ep}\frac{x\underline{h}'(t)}{\underline{h}^2(t)}\le 2 u^*_{\ep}\frac{\underline{h}'(t)}{\underline{h}(t)}\\
&=&\frac{2u^*_{\ep}(t+\theta)^{-1}}{\gamma-1}\left[1+\beta\ln^{-1}(t+\theta)\right]\\
&\le&\frac{4u^*_{\ep}(t+\theta)^{-1}}{\gamma-1}\\
&\le&\frac{\varsigma_1\tilde{r}\ep}{8(2-\gamma)(\gamma-1)^{\beta}}(\frac{1}{6^{2-\gamma}}-\frac{1}{8^{2-\gamma}})l^{1-\gamma}_1\min\{\frac{1}{2^{\beta}},1\}(t+\theta)^{-1}
\eess
provided that $l_1$ is small enough such that
\[\frac{4u^*_{\ep}}{\gamma-1}\le\frac{\varsigma_1\tilde{r}\ep}{8(2-\gamma)(\gamma-1)^{\beta}}(\frac{1}{6^{2-\gamma}}-\frac{1}{8^{2-\gamma}})l^{1-\gamma}_1\min\{\frac{1}{2^{\beta}},1\}.\]
So the inequality in the first line of \eqref{2.5} holds. Analogously, we can show the inequality in the third line of \eqref{2.5} is valid if $\ep$, $l_1$ and $\theta$ are chosen as above.

Then we prove the inequality in the fourth line of \eqref{2.5} holds. Straightforward computations show that if $l_1\theta^{\frac{1}{\gamma-1}}\ln^{\frac{\beta}{\gamma-1}}\theta$ is suitably large, then
\bess
&&\displaystyle\int_{X_{\ep}}^{\underline h(t)}\int_{\underline h(t)}^{\infty}
\big(\mu_1J_1(x-y)\underline u(t,x)+\mu_2J_2(x-y)\ud v(t,x)\big){\rm d}y{\rm d}x\\
&&\ge\int_{\frac{\ud h(t)}{2}}^{\underline h(t)}\int_{\underline h(t)}^{\infty}
2\big(\mu_1u^*_{\ep}J_1(x-y)+\mu_2v^*_{\ep}J_2(x-y)\big)\frac{\ud h(t)-x}{\ud h(t)}{\rm d}y{\rm d}x\\
&&=\frac{2}{\ud h(t)}\int_{-\frac{\ud h(t)}{2}}^{0}\int_{0}^{\infty}
\big(\mu_1u^*_{\ep}J_1(x-y)+\mu_2v^*_{\ep}J_2(x-y)\big)(-x){\rm d}y{\rm d}x\\
&&=\frac{2}{\ud h(t)}\left\{\int_{0}^{\frac{\ud h(t)}{2}}\int_{0}^{y}+\int_{\frac{\ud h(t)}{2}}^{\yy}\int_{0}^{\frac{\ud h(t)}{2}}\right\}
\big(\mu_1u^*_{\ep}J_1(y)+\mu_2v^*_{\ep}J_2(y)\big)x{\rm d}x{\rm d}y\\
&&\ge\frac{1}{\ud h(t)}\int_{\frac{\ud h(t)}{4}}^{\frac{\ud h(t)}{2}}\big(\mu_1u^*_{\ep}J_1(y)+\mu_2v^*_{\ep}J_2(y)\big)y^2{\rm d}y\\
&&\ge\frac{C_1}{\ud h(t)}\int_{\frac{\ud h(t)}{4}}^{\frac{\ud h(t)}{2}}y^{2-\gamma}\ln^{\beta}y{\rm d}y\\
&&=\frac{C_1}{\ud h(t)}\left[\ln^{\beta}y\frac{y^{3-\gamma}}{3-\gamma}\bigg|^{\frac{\ud h(t)}{2}}_{\frac{\ud h(t)}{4}}-\frac{\beta}{3-\gamma}\int_{\frac{\ud h(t)}{4}}^{\frac{\ud h(t)}{2}}y^{2-\gamma}\ln^{\beta-1}y\dy\right]\\
&&=\frac{C_1\ud h^{2-\gamma}(t)}{3-\gamma}\left[\frac{(\ln \ud h(t)-\ln 2)^{\beta}}{2^{3-\gamma}}-\frac{(\ln \ud h(t)-\ln 4)^{\beta}}{4^{3-\gamma}}\right]-\frac{C_1\beta}{(3-\gamma)\ud h(t)}\int_{\frac{\ud h(t)}{4}}^{\frac{\ud h(t)}{2}}y^{2-\gamma}\ln^{\beta-1}y\dy\\
&&\ge\frac{C_1\ud h^{2-\gamma}(t)\ln^{\beta}\ud h(t)}{2(3-\gamma)}(\frac{1}{2^{3-\gamma}}-\frac{1}{4^{3-\gamma}})-\frac{C_1\beta}{(3-\gamma)\ud h(t)}\int_{\frac{\ud h(t)}{4}}^{\frac{\ud h(t)}{2}}y^{2-\gamma}\ln^{\beta-1}y\dy\\
&&\ge\ud h^{2-\gamma}(t)\ln^{\beta}\ud h(t)\left[\frac{C_1}{2(3-\gamma)}(\frac{1}{2^{3-\gamma}}-\frac{1}{4^{3-\gamma}})+o_{\ud h\to\yy}(1)\right]\\
&&\ge \frac{C_1}{4(3-\gamma)}(\frac{1}{2^{3-\gamma}}-\frac{1}{4^{3-\gamma}}) \ud h^{2-\gamma}(t)\ln^{\beta}\ud h(t)\\
&&=\frac{C_1}{4(3-\gamma)}(\frac{1}{2^{3-\gamma}}-\frac{1}{4^{3-\gamma}})l^{2-\gamma}_1(t+\theta)^{\frac{2-\gamma}{\gamma-1}}\ln^{\frac{\beta(2-\gamma)}{\gamma-1}}(t+\theta)\left[\ln l_1+\frac{1}{\gamma-1}(\ln(t+\theta)+\beta\ln\ln(t+\theta))\right]^{\beta}\\
&&\ge \frac{C_1}{4(3-\gamma)}(\frac{1}{2^{3-\gamma}}-\frac{1}{4^{3-\gamma}})l^{2-\gamma}_1(t+\theta)^{\frac{2-\gamma}{\gamma-1}}\ln^{\frac{\beta(2-\gamma)}{\gamma-1}}(t+\theta)\min\{\frac{1}{2^{\beta}},1\}\frac{1}{(\gamma-1)^{\beta}}\ln^{\beta}(t+\theta)\\
&&=\min\{\frac{1}{2^{\beta}},1\}\frac{C_1l^{2-\gamma}_1}{4(\gamma-1)^{\beta}(3-\gamma)}(\frac{1}{2^{3-\gamma}}-\frac{1}{4^{3-\gamma}})(t+\theta)^{\frac{2-\gamma}{\gamma-1}}\ln^{\frac{\beta}{\gamma-1}}(t+\theta),
\eess
where $C_1$ depends only on $a$, $b$, $H$, $G$, $\mu_i$ and $J_i$ with $i=1,2$, and $l_1\theta^{\frac{1}{\gamma-1}}\ln^{\frac{\beta}{\gamma-1}}\theta$ is sufficiently large such that
\[\left[\frac{(\ln \ud h(t)-\ln 2)^{\beta}}{2^{3-\gamma}}-\frac{(\ln \ud h(t)-\ln 4)^{\beta}}{4^{3-\gamma}}\right]\ge\frac{\ln^{\beta}\ud h(t)}{2}(\frac{1}{2^{3-\gamma}}-\frac{1}{4^{3-\gamma}}).\]
 On the other hand, simple computations arrive at
\bess
\underline{h}'(t)&=&\frac{l_1}{\gamma-1}(t+\theta)^{\frac{2-\gamma}{\gamma-1}}\ln^{\frac{\beta}{\gamma-1}}(t+\theta)[1+\beta\ln^{-1}(t+\theta)]\\
&\le&\frac{2l_1}{\gamma-1}(t+\theta)^{\frac{2-\gamma}{\gamma-1}}\ln^{\frac{\beta}{\gamma-1}}(t+\theta)\\
&\le&\min\{\frac{1}{2^{\beta}},1\}\frac{C_1l^{2-\gamma}_1}{4(\gamma-1)^{\beta}(3-\gamma)}(\frac{1}{2^{3-\gamma}}-\frac{1}{4^{3-\gamma}})(t+\theta)^{\frac{2-\gamma}{\gamma-1}}\ln^{\frac{\beta}{\gamma-1}}(t+\theta)
\eess
provided that $l_1$ is suitably small such that
\[\frac{2}{\gamma-1}\le\min\{\frac{1}{2^{\beta}},1\}\frac{C_1l^{1-\gamma}_1}{4(\gamma-1)^{\beta}(3-\gamma)}(\frac{1}{2^{3-\gamma}}-\frac{1}{4^{3-\gamma}}).\]
Thus the inequality in the fourth line of \eqref{2.5} holds.

Now to prove \eqref{2.5}, it remains to show the inequalities in the last two lines of \eqref{2.5}. Since one can do this by following the same lines as in the proof of Lemma \ref{l2.2}, we omit the details. Hence, \eqref{2.5} holds. The proof is finished.
 \end{proof}

We then consider the case where condition \eqref{1.8} holds with $\beta>-1$.

  \begin{lemma}{\label{l2.4}}Let $(u,v,h)$ be the unique solution of \eqref{1.1} and the condition \eqref{1.8} hold with $\beta>-1$. Then the following statements are valid.
 \begin{enumerate}[$(1)$]
 \item There exists a positive constant $C$ such that $h(t)\ge C t\ln^{\beta+1} t$ for all large $t$.
 \item For any $s(t)=o(t\ln^{\beta+1} t)$, we have
 \[\liminf_{t\to\yy}(u(t,x),v(t,x))\ge (U(x),V(x)) ~ ~ {\rm uniformly ~ in ~ }[0,s(t)].\]

 \end{enumerate}
 \end{lemma}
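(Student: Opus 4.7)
The plan is to mirror the construction used in Lemma \ref{l2.2} and Lemma \ref{l2.3}, tailored to the decay rate $J_i \sim \ln^{\beta}|x|/|x|^2$ with $\beta>-1$. For constants $0<l_1\ll 1$ and $\theta\gg 1$ to be fixed later, I would define the candidate lower solution
\[
\underline h(t)=l_1(t+\theta)\ln^{\beta+1}(t+\theta),\qquad \underline u(t,x)=u^{*}_{\ep}\hat\zeta(t,x),\qquad \underline v(t,x)=v^{*}_{\ep}\hat\zeta(t,x),
\]
on $[0,\infty)\times[X_{\ep},\underline h(t)]$, with the piecewise-linear profile
\[
\hat\zeta(t,x)=\min\Bigl\{1,\;2\tfrac{\underline h(t)-x}{\underline h(t)}\Bigr\}
\]
exactly as in Lemma \ref{l2.3}, and with $X_{\ep}$ chosen so that $(U,V)\ge(1-\tfrac{\sqrt\ep}{2})(u^{*},v^{*})$ for $x\ge X_{\ep}$. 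By Proposition \ref{p2.1} applied with $\rho=1$ and $l=X_{\ep}$, provided $l_1\theta\ln^{\beta+1}\theta$ is large enough, there exists $k_0>0$ such that $\int_{X_{\ep}}^{\underline h(t)}J_i(x-y)\hat\zeta(t,y)\dy\ge(1-\ep^{2})\hat\zeta(t,x)$ for $x\in[k_0,\underline h(t)]$, $i=1,2$.

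The goal is then to verify the analogue of \eqref{2.5} with the exceptional point $\underline h(t)/2$ playing the role of the corner of $\hat\zeta$. The main integral estimates I would carry out are, on $[\underline h(t)/4,\underline h(t)]$,
\[
\int_{X_{\ep}}^{\underline h(t)}\!J_i(x-y)\hat\zeta(t,y)\dy\ \gtrsim\ \frac{1}{\underline h(t)}\int_{\underline h(t)/8}^{\underline h(t)/6}\!\frac{\ln^{\beta}y}{y^{2}}\,y\,\dy\ \sim\ \frac{\ln^{\beta+1}\underline h(t)}{\underline h(t)}\ \sim\ \frac{\ln^{\beta+1}(t+\theta)}{\underline h(t)},
\]
where the constant is strictly positive once $\beta>-1$, and the free-boundary flux
\[
\int_{X_{\ep}}^{\underline h(t)}\!\!\int_{\underline h(t)}^{\infty}\!\!\bigl(\mu_1 J_1+\mu_2 J_2\bigr)\hat\zeta\,\dy\dx\ \gtrsim\ \frac{1}{\underline h(t)}\int_{\underline h(t)/4}^{\underline h(t)/2}\!\frac{\ln^{\beta}y}{y^{2}}\,y^{2}\,\dy\ \sim\ \ln^{\beta+1}\underline h(t)\ \sim\ \ln^{\beta+1}(t+\theta).
\]
These are the $\beta>-1$ analogues of the estimates \eqref{2.6}, \eqref{2.10} in Lemmas \ref{l2.2}, \ref{l2.3}; in particular the primitive of $\ln^{\beta}y/y$ is $\ln^{\beta+1}y/(\beta+1)$, which is exactly the function appearing in our ansatz.

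Next I compute $\underline h'(t)=l_1\ln^{\beta+1}(t+\theta)+l_1(\beta+1)\ln^{\beta}(t+\theta)\le 2l_1\ln^{\beta+1}(t+\theta)$ and $\underline u_t\le 2u^{*}_{\ep}\underline h'(t)/\underline h(t)\lesssim (t+\theta)^{-1}$ on the sloping piece; both are absorbed by the corresponding lower bounds above by taking $l_1$ sufficiently small (independently of $\theta$). Combined with \eqref{2.3} and the standard splitting from \eqref{2.8} to convert $(1-\ep^{2})\underline u$ into $\underline u$ plus a slack $\tilde r\ep\underline u/3$, this gives the first four inequalities of \eqref{2.5} on $[k_0,\underline h(t))\setminus\{\underline h(t)/2\}$. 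Finally, with $\ep,l_1,\theta$ fixed, spreading ensures there exists $T>0$ such that $h(T)>\underline h(0)$ and $(u(T+t,x),v(T+t,x))\ge(1-\sqrt\ep)(u^{*},v^{*})\ge(\underline u(t,x),\underline v(t,x))$ for $t\ge 0,\,x\in[X_{\ep},\underline h(0)]$, providing the initial and lateral ordering. Proposition \ref{p2.2} then yields $h(t+T)\ge\underline h(t)$ and $(u,v)(t+T,x)\ge(\underline u,\underline v)(t,x)$, which proves (1), and (2) follows by the same limiting argument used at the beginning of Lemma \ref{l2.2}: for $s(t)=o(t\ln^{\beta+1}t)$ we have $\hat\zeta(t,s(t))\to 1$, so $\underline u(t,x)\to u^{*}_{\ep}$ and $\underline v(t,x)\to v^{*}_{\ep}$ uniformly on $[X_{\ep},s(t)]$, and local uniform convergence on $[0,X_{\ep}]$ closes the argument.

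The main obstacle, as in Lemma \ref{l2.3}, lies in a careful accounting of the logarithmic factors when evaluating $\int(\ln^{\beta}y)/y^{2}\cdot y^{k}\,\dy$ between $c_1\underline h$ and $c_2\underline h$: one must confirm that the leading constant is strictly positive (which uses $\beta>-1$ so that the $\ln^{\beta+1}$ primitive dominates, rather than cancels) and that the lower-order error from the $\ln^{\beta-1}$ integrals is $o_{\underline h\to\infty}(1)$ relative to the leading term. Once these bookkeeping estimates are in place, all remaining inequalities reduce to choosing $l_1$ small and $\theta$ large — exactly the pattern already executed in Lemmas \ref{l2.2} and \ref{l2.3}.
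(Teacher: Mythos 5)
Your overall strategy (lower solution of the form $\underline h(t)=l_1(t+\theta)\ln^{\beta+1}(t+\theta)$, a truncated profile, Proposition \ref{p2.1}, then Proposition \ref{p2.2}) matches the paper, but the specific profile you chose — the Lemma \ref{l2.3} profile $\hat\zeta=\min\{1,2(\underline h-x)/\underline h\}$ with corner at $\underline h/2$ — does not work when $\gamma=2$, and the two displayed integral evaluations that are supposed to justify it are incorrect. A fixed-ratio slice loses a whole logarithm: $\frac{1}{\underline h}\int_{\underline h/8}^{\underline h/6}\frac{\ln^{\beta}y}{y}\,\dy=\frac{1}{(\beta+1)\underline h}\bigl[\ln^{\beta+1}(\underline h/6)-\ln^{\beta+1}(\underline h/8)\bigr]\sim\frac{\ln^{\beta}\underline h}{\underline h}$, not $\frac{\ln^{\beta+1}\underline h}{\underline h}$, and likewise $\frac{1}{\underline h}\int_{\underline h/4}^{\underline h/2}\ln^{\beta}y\,\dy\sim\ln^{\beta}\underline h$, not $\ln^{\beta+1}\underline h$. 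The full power $\ln^{\beta+1}$ only appears when the integration range spans from a scale much smaller than $\underline h$ (e.g.\ $O(1)$ or $(t+\theta)^{\alpha}$) up to $c\,\underline h$; this is exactly the borderline feature of $\gamma=2$ that distinguishes it from Lemma \ref{l2.3}, where a fixed-ratio slice already captures the correct order $\underline h^{2-\gamma}\ln^{\beta}\underline h$.

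The interior inequality can be salvaged (integrate over $[M,\underline h/4]$ instead of $[\underline h/8,\underline h/6]$ to recover $\ln^{\beta+1}\underline h/\underline h\sim 1/(l_1(t+\theta))$, which beats $\underline u_t\lesssim 1/(t+\theta)$ for small $l_1$), but the free-boundary inequality cannot: with the linear profile the weight $\hat\zeta(t,x)\approx(\underline h-x)/\underline h$ near the boundary suppresses precisely the jumps of intermediate length $z\in[1,\underline h/2]$ that generate the $\ln^{\beta+1}$ in $I(h)$ of Lemma \ref{l2.1}, and the total flux is genuinely $\Theta(\ln^{\beta}(t+\theta))$. Since $\underline h'(t)=l_1\ln^{\beta+1}(t+\theta)(1+o(1))$, the inequality $\underline h'<\text{flux}$ fails for all large $t$ no matter how small $l_1$ is; shrinking $l_1$ cannot compensate for a missing factor of $\ln(t+\theta)$. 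The paper avoids this by taking $\tilde\zeta(t,x)=\min\{1,(\underline h(t)-x)/(t+\theta)^{\alpha}\}$ with $\alpha\in(0,1)$, i.e.\ a transition layer of width $(t+\theta)^{\alpha}=o(\underline h)$, so that $\underline u\equiv u^{*}_{\ep}$ on $[X_{\ep},\underline h(t)-(t+\theta)^{\alpha}]$ and the flux integral becomes $\int_{2(t+\theta)^{\alpha}}^{\underline h/2}\ln^{\beta}y\,y^{-1}\dy\sim\frac{1-\alpha^{\beta+1}}{\beta+1}\ln^{\beta+1}(t+\theta)$; the interior inequality on the narrow sloping piece is then checked against the gain $\sim\ln^{\beta+1}(t+\theta)/(t+\theta)^{\alpha}$ as in \eqref{2.12}. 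You need this modified profile (and the accompanying constraints on $\alpha,l_1,\theta$) for the argument to close.
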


\begin{proof} Let $X_{\ep}$ and $k_0$ be defined as in the proof of Lemma \ref{l2.3} for any small $\ep>0$. Define
\[\underline{h}(t)=l_1(t+\theta)\ln^{\beta+1} (t+\theta), ~ ~ \underline{u}(t,x)=u^*_{\ep}\tilde{\zeta}(t,x),~ ~ \ud v(t,x)=v^*_{\ep}\tilde{\zeta}(t,x),\]
where positive constants $l_1\ll1$ and $\theta\gg1$, and
\bess
&\alpha\in(0,1), ~ \theta^{\alpha}>k_0, ~ l_1\theta^{1-\alpha}\ln^{\beta+1}\theta\ge 3\alpha, ~ l_1\theta\ln^{\beta+1}\theta-3\theta^{\alpha}>2k_0,\\
&(u^*_{\ep},v^*_{\ep})=(1-\sqrt{\ep})(u^*,v^*), ~ ~ \tilde{\zeta}(t,x)=\min\kk\{1,\,\frac{\underline{h}(t)-x}
{(t+\theta)^{\alpha}}\rr\} ~ ~ {\rm for ~ }t\ge0, ~ x\in[X_{\ep},\underline{h}(t)].\eess

Now we are in the position to show that by choosing suitable $(l_1,\theta,\ep,T)$, there holds:
\bes\label{2.11}
\left\{\begin{aligned}
&\underline u_t\le d_1\displaystyle\int_{X_{\ep}}^{\underline h(t)}J_1(x-y)\underline u(t,y){\rm d}y-d_1\underline u-a\underline u+H(\ud v),\\
&\hspace{50mm}t>0,~x\in[k_0,\underline h(t))\setminus\left\{\underline{h}(t)-(t+\theta)^{\alpha}\right\},\\
&\underline v_t\le d_2\displaystyle\int_{X_{\ep}}^{\underline h(t)}J_2(x-y)\underline v(t,y){\rm d}y-d_2\underline v-b\underline v+G(\ud u),\\
&\hspace{50mm} t>0,~x\in[k_0,\underline h(t))\setminus\left\{\underline{h}(t)-(t+\theta)^{\alpha}\right\},\\
&\underline u(t,\underline h(t))=\ud v(t,\ud h(t))=0,&& t>0,\\
&\underline h'(t)<\displaystyle\int_{X_{\ep}}^{\underline h(t)}\int_{\underline h(t)}^{\infty}
\big(\mu_1J_1(x-y)\underline u(t,x)+\mu_2J_2(x-y)\ud v(t,x)\big){\rm d}y{\rm d}x,&& t>0,\\
&\underline{u}(t,x)\le u(t+T,x), ~ \ud v(t,x)\le v(t+T,x), && t>0, ~ x\in[X_{\ep},k_0],\\
&\underline h(0)<h(T),\;\;\underline u(0,x)\le u(T,x),~ \ud v(0,x)\le v(T,x), && x\in[X_{\ep},\underline h(0)].
\end{aligned}\right.
\ees
Once it is done, by the definition of $(\underline{u},\ud v, \underline{h})$ and the similar arguments as in the proof of Lemma \ref{l2.2}, one readily derives the results as wanted.

Now let us prove \eqref{2.11}. The identities in the fifth line of \eqref{2.11} is obvious. We next deal with the inequality in the first line of \eqref{2.11}.
Clearly,
 \bess
 \underline{u}(t,x)\ge u^*_{\ep}\frac{\underline{h}-x}{2(t+\theta)^{\alpha}}\;\;\;{\rm for} \;\;x\in[\underline{h}(t)-2(t+\theta)^{\alpha},\underline{h}(t)].
  \eess

Thus, for $x\in[\underline{h}(t)-(t+\theta)^{\alpha},\underline{h}(t)]$, letting $\theta$ be sufficiently large, we have
 \bes\label{2.12}
\int_{X_{\ep}}^{\underline h(t)}J_1(x-y)\underline u(t,y){\rm d}y&=&\int_{X_{\ep}-x}^{\underline h(t)-x}J_1(y)\underline u(t,x+y){\rm d}y\nonumber\\
 &\ge&\frac{ u^*_{\ep}}2\int_{-(t+\theta)^{\alpha}}^{-(t+\theta)^{\alpha/2}}J_1(y)
 \frac{\underline{h}(t)-x-y}{(t+\theta)^{\alpha}}{\rm d}y\nonumber\\
&\ge&\frac{ u^*_{\ep}}2\int_{-(t+\theta)^{\alpha}}^{-(t+\theta)^{\alpha/2}}J_1(y)
\frac{-y}{(t+\theta)^{\alpha}}{\rm d}y\nonumber\\
 &\ge&\frac{ u^*_{\ep}\varsigma_1}2\int_{-(t+\theta)^{\alpha}}^{-(t+\theta)^{\alpha/2}}\frac{\ln^{\beta}|y|}{y^2}
 \frac{-y}{(t+\theta)^{\alpha}}{\rm d}y\nonumber\\
 &=&\frac{ u^*_{\ep}\varsigma_1}{2 (t+\theta)^{\alpha}}\int_{(t+\theta)^{\alpha/2}}^{(t+\theta)^{\alpha}}\frac{\ln^{\beta}y}{y}{\rm d}y\nonumber\\
 &=&\frac{ u^*_{\ep}\varsigma_1\ln^{\beta+1}(t+\theta)}{2 (t+\theta)^{\alpha}(\beta+1)}[\alpha^{\beta+1}-(\frac{\alpha}{2})^{\beta+1}].
\ees

Besides, by Proposition \ref{p2.1} and our choices of $l_1$ and $\theta$, we have
  \bes\label{2.13}
\int_{X_{\ep}}^{\underline{h}(t)}J_i(x-y)\tilde{\zeta}(t,y)\dy\ge(1-\ep^2)\tilde{\zeta}(t,x) ~ ~ {\rm for ~ }t\ge0,~ x\in[k_0,\underline{h}(t)], ~ i=1,2.
\ees
Certainly, using \eqref{2.13}, if $\ep$ is small enough, \eqref{2.8} is valid here.
On the other hand, we have $\underline{u}_t=0$ for $t>0$ and $x\in[k_0,\underline{h}(t)-(t+\theta)^{\alpha})$. Thus the first inequality of \eqref{2.11} holds for $x\in[k_0,\underline{h}(t)-(t+\theta)^{\alpha})$. Using \eqref{2.12} yields
 \bess
 \underline{u}_t&=&u^*_{\ep}\frac{\ud h'(t)(t+\theta)^{\alpha}-(\ud h(t)-x)\alpha(t+\theta)^{\alpha-1}}{(t+\theta)^{2\alpha}}\\
&\le&u^*_{\ep}\left[\frac{\ud h'(t)}{(t+\theta)^{\alpha}}+\frac{\alpha\ud h(t)}{(t+\theta)^{\alpha+1}}\right]\\
&=&\frac{l_1u^*_{\ep}\ln^{\beta+1}(t+\theta)}{(t+\theta)^{\alpha}}\left[1+\alpha+(\beta+1)\ln^{-1}(t+\theta)\right]\\
&\le&\frac{l_1u^*_{\ep}\ln^{\beta+1}(t+\theta)}{(t+\theta)^{\alpha}}(2+\alpha+\beta)\\
&\le&\frac{ u^*_{\ep}\varsigma_1\ln^{\beta+1}(t+\theta)}{2 (t+\theta)^{\alpha}(\beta+1)}[\alpha^{\beta+1}-(\frac{\alpha}{2})^{\beta+1}]
\eess
for $t>0$ and $x\in(\underline{h}(t)-(t+\theta)^{\alpha},\, \underline{h}(t)]$ provided that $l_1$ is small enough such that
\[l_1u^*_{\ep}(2+\alpha+\beta)\le\frac{u^*_{\ep}\varsigma_1}{2(\beta+1)}[\alpha^{\beta+1}-(\frac{\alpha}{2})^{\beta+1}].\]
So the first inequality holds. Similarly, we can show the inequality for $\ud v$, i.e., the inequality in the third line of \eqref{2.11} holds if $l_1$, $\ep$ and $\theta$ are chosen as above.

We now turn to the inequality of $\ud h'$, i.e., the one in the sixth line of \eqref{2.11}. Direct calculations show that if $\theta$ is large enough, then
\bess
&&\int_{X_{\ep}}^{\underline h(t)}\int_{\underline h(t)}^{\infty}
\big(\mu_1J_1(x-y)\underline u(t,x)+\mu_2J_2(x-y)\ud v(t,x)\big){\rm d}y{\rm d}x\\
&\ge&\int_{\frac{\underline{h}(t)}2}^{\underline h(t)-(t+\theta)^{\alpha}}\int_{\underline h(t)}^{\infty}
\big(\mu_1u^*_{\ep}J_1(x-y)+\mu_2v^*_{\ep}J_2(x-y)\big){\rm d}y{\rm d}x\\
&=&\left\{\int_{(t+\theta)^{\alpha}}^{\frac{\underline{h}(t)}2}
\int_{(t+\theta)^{\alpha}}^{y}+\int_{\frac{\underline{h}(t)}2}^{\infty}
\int_{(t+\theta)^{\alpha}}^{\frac{\underline{h}(t)}2}\right\}
\big(\mu_1u^*_{\ep}J_1(y)+\mu_2v^*_{\ep}J_2(y)\big){\rm d}x{\rm d}y\\
&\ge&\int_{(t+\theta)^{\alpha}}^{\frac{\underline{h}(t)}2}
\int_{(t+\theta)^{\alpha}}^{y}\big(\mu_1u^*_{\ep}J_1(y)+\mu_2v^*_{\ep}J_2(y)\big){\rm d}x{\rm d}y\\
&\ge&\int_{2(t+\theta)^{\alpha}}^{\frac{\underline{h}(t)}2}\big(\mu_1u^*_{\ep}J_1(y)+\mu_2v^*_{\ep}J_2(y)\big)[y-(t+\theta)^{\alpha}]{\rm d}y\\
&\ge&\int_{2(t+\theta)^{\alpha}}^{\frac{\underline{h}(t)}2}\big(\mu_1u^*_{\ep}J_1(y)+\mu_2v^*_{\ep}J_2(y)\big)\frac{y}{2}{\rm d}y\\
&\ge&C_2\int_{2(t+\theta)^{\alpha}}^{\frac{\underline{h}(t)}2}\frac{\ln^{\beta}y}{y}\dy\\
&=&\frac{C_2}{\beta+1}\left[\ln^{\beta+1}\frac{\ud h(t)}{2}-\ln^{\beta+1}[2(t+\theta)^{\alpha}]\right]\\
&=&\frac{C_2}{\beta+1}\left\{\left[\ln l_1+\ln(t+\theta)+(\beta+1)\ln\ln(t+\theta)-\ln2\right]^{\beta+1}-\left[\ln2+\alpha\ln(t+\theta)\right]^{\beta+1}\right\}\\
&\ge&\frac{C_2(1-\alpha^{\beta+1})}{\beta+1}\ln^{\beta+1}(t+\theta),
\eess
where $C_2$ depends only on $(a,b,H,G,J_1,J_2,\mu_1,\mu_2)$.
 Moreover, it is easy to see that
\bess
\underline{h}'(t)&=& l_1\ln^{\beta+1}(t+\theta)+ l_1(\beta+1)\ln^{\beta}(t+\theta)\\
&=&  l_1\ln^{\beta+1}(t+\theta)\left[1+(\beta+1)\ln^{-1}(t+\theta)\right]\\
&\le& l_1(2+\beta)\ln^{\beta+1}(t+\theta)\\
&\le&\frac{C_2(1-\alpha^{\beta+1})}{\beta+1}\ln^{\beta+1}(t+\theta)
\eess
provided that $l_1$ is suitably small such that
\[l_1(2+\beta)\le\frac{C_2(1-\alpha^{\beta+1})}{\beta+1}.\]

Now to prove \eqref{2.11}, it remains to verify the inequalities in the last two lines of \eqref{2.11}. Since one can do this by arguing as in the proof of Lemma \ref{l2.2}, we omit the details here. Hence \eqref{2.11} holds. The proof is ended.
\end{proof}

Then we turn to the case where the assumption \eqref{1.8} hold with $\beta=-1$.

\begin{lemma}\label{l2.5}Let $(u,v,h)$ be the unique solution of \eqref{1.1} and the assumption \eqref{1.8} hold with $\beta=-1$. Then the following statements are valid.
 \begin{enumerate}[$(1)$]
 \item There exists a positive constant $C$ such that $h(t)\ge C t\ln\ln t$ for all large $t$.
 \item For any $s(t)=o(t\ln\ln t)$, we have
 \[\liminf_{t\to\yy}(u(t,x),v(t,x))\ge (U(x),V(x)) ~ ~ {\rm uniformly ~ in ~ }[0,s(t)].\]
 \end{enumerate}
 \end{lemma}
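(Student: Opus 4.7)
The argument mirrors the scheme of Lemma~\ref{l2.4}, with the target rate and all associated integral estimates adapted to the borderline decay $J_i(y)\sim\ln^{-1}|y|/|y|^2$. For any small $\ep>0$, I would take $X_\ep$ as in Lemma~\ref{l2.2} and $k_0$ from Proposition~\ref{p2.1} with $l=X_\ep$, and for small $l_1>0$, small $\alpha\in(0,1)$ and large $\theta>0$ to be tuned, define
\[
\underline h(t)=l_1(t+\theta)\ln\ln(t+\theta),\quad \underline u(t,x)=u^*_\ep\tilde\zeta(t,x),\quad \underline v(t,x)=v^*_\ep\tilde\zeta(t,x),
\]
with $(u^*_\ep,v^*_\ep)=(1-\sqrt\ep)(u^*,v^*)$ and the linear cutoff $\tilde\zeta(t,x)=\min\{1,(\underline h(t)-x)/(t+\theta)^\alpha\}$. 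The plan is to verify the analog of \eqref{2.11} with the two inner inequalities holding on $[k_0,\underline h(t))\setminus\{\underline h(t)-(t+\theta)^\alpha\}$, and then to conclude by Proposition~\ref{p2.2} together with the same $\liminf$ argument as at the end of Lemma~\ref{l2.2}. The boundary identity at $x=\underline h(t)$, the pointwise comparison on $[X_\ep,k_0]$ at large times, and the initial comparison at $t=0$ should transfer verbatim from Lemmas~\ref{l2.2}--\ref{l2.4}, using that spreading occurs and $(U,V)\ge(1-\sqrt\ep/2)(u^*,v^*)$ on $[X_\ep,\underline h(0)]$.

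The substance of the proof lies in the convolution lower bound $\int_{X_\ep}^{\underline h(t)}J_i(x-y)\underline u(t,y)\,dy$ near the free boundary (cf.\ \eqref{2.12}) and in the free boundary double integral (cf.\ the long display in the proof of Lemma~\ref{l2.4}). All such estimates reduce to evaluating $\int(\ln^\beta y/y)\,dy$ with $\beta=-1$, so the polynomial antiderivative used in Lemma~\ref{l2.4} is replaced by the borderline formula
\[
\int_a^b\frac{dy}{y\ln y}=\ln\ln b-\ln\ln a.
\]
Combined with Proposition~\ref{p2.1} for the bulk bound $\int J_i\tilde\zeta\,dy\ge(1-\ep^2)\tilde\zeta$, with \eqref{2.3} together with the concavity of $H$ and $G$ for the reaction terms, and with the same splitting into bulk and boundary layer as in Lemma~\ref{l2.4}, this produces a lower bound of order $\ln\ln(t+\theta)$ in the free boundary inequality, matching $\underline h'(t)\sim l_1\ln\ln(t+\theta)$, and will absorb the time derivative $\underline u_t=u^*_\ep\underline h'(t)/(t+\theta)^\alpha$ inside the transition layer.

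The hard part, and the reason the construction is not an immediate translation of Lemma~\ref{l2.4}, is that on the double logarithmic scale the window $[(t+\theta)^{\alpha/2},(t+\theta)^\alpha]$ used there shrinks to a trivial range, since $\ln\ln(t+\theta)^\alpha-\ln\ln(t+\theta)^{\alpha/2}=\ln 2$. To restore the needed $\ln\ln(t+\theta)$ factor, both integrals must be widened to a range with a fixed constant lower endpoint, for instance $[B,\underline h(t)/2]$ with a large fixed $B$, which yields $\ln\ln\underline h(t)-\ln\ln B\sim\ln\ln(t+\theta)$. The delicate interplay is then between the boundary layer width $(t+\theta)^\alpha$ in $\tilde\zeta$ and this fixed lower cut $B$: one needs $(t+\theta)^\alpha$ small enough for the widened convolution lower bound to dominate $\underline u_t$ at every $x$ in the layer, yet large enough for Proposition~\ref{p2.1} and the reaction estimate to control the bulk. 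Once $l_1,\alpha,\ep$ and $\theta$ are tuned to balance all of these constraints, the four groups of inequalities in the analog of \eqref{2.11} follow as in Lemma~\ref{l2.4}, completing the proof.
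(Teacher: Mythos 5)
You correctly locate the borderline difficulty of the case $\beta=-1$: the windows $[(t+\theta)^{\alpha/2},(t+\theta)^{\alpha}]$ of Lemma \ref{l2.4} contribute only the constant $\ln 2$ on the $\ln\ln$ scale. But your proposed repair --- keep the transition layer of width $(t+\theta)^{\alpha}$ and merely widen the integration window down to a fixed constant $B$ --- only rescues the convolution estimate inside the layer (there the linear cutoff supplies an extra factor $|y|$, and $\int_B^{(t+\theta)^{\alpha}}\frac{{\rm d}y}{y\ln y}=\ln\ln(t+\theta)+\ln\alpha-\ln\ln B\sim\ln\ln(t+\theta)$ does work). It does \emph{not} rescue the free boundary inequality, and no tuning of $l_1,\alpha,\ep,\theta$ can. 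Indeed, writing $K(s)=\int_s^{\yy}J_1(z)\,{\rm d}z\sim\frac{1}{s\ln s}$ and substituting $s=\ud h(t)-x$, the flux produced by your lower solution is exactly
\[
\int_{X_\ep}^{\ud h(t)}\!\int_{\ud h(t)}^{\yy}J_1(x-y)\ud u(t,x)\,{\rm d}y\,{\rm d}x
= u^*_\ep\!\int_{(t+\theta)^{\alpha}}^{\ud h(t)-X_\ep}\!\!K(s)\,{\rm d}s
+\frac{u^*_\ep}{(t+\theta)^{\alpha}}\!\int_{0}^{(t+\theta)^{\alpha}}\!\!sK(s)\,{\rm d}s .
\]
The first term is comparable to $\ln\ln\ud h(t)-\ln\ln\big((t+\theta)^{\alpha}\big)=\ln(1/\alpha)+o(1)$, a \emph{bounded} quantity: for the borderline kernel $|z|^{-2}\ln^{-1}|z|$, essentially all of the $\ln\ln M$ mass of $\int_B^{M}zJ(z)\,{\rm d}z$ sits in $z\le M^{\alpha}$, i.e.\ comes from points within a polynomially small distance of the front. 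The second term is $O\big(1/\ln(t+\theta)\big)$. So the right-hand side of the free boundary inequality stays bounded while $\ud h'(t)=l_1\ln\ln(t+\theta)(1+o(1))\to\yy$, and Proposition \ref{p2.2} cannot be invoked. The ``widen to $[B,\ud h(t)/2]$'' device is unavailable here because the integral already ranges over all of $[X_\ep,\ud h(t)]$; the region carrying the $\ln\ln$ growth is precisely the layer where your cutoff annihilates $\ud u$.

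The missing idea --- and what the paper actually does --- is to make the transition layer \emph{sub-polynomially} thin: take
\[
\bar\zeta(t,x)=\min\Big\{1,\;\frac{\ud h(t)-x}{e^{\ln^{\alpha}(t+\theta)}}\Big\},\qquad \alpha\in(0,\tfrac12),
\]
with the side conditions adjusted accordingly ($e^{\ln^{\alpha}\theta}>k_0$, $l_1\theta\ln\ln\theta-3e^{\ln^{\alpha}\theta}>2k_0$, etc.). Since $\ln\ln\big(e^{\ln^{\alpha}(t+\theta)}\big)=\alpha\ln\ln(t+\theta)$, the saturated region $\{\bar\zeta=1\}$ now captures $\int_{2e^{\ln^{\alpha}(t+\theta)}}^{\ud h(t)/2}\frac{{\rm d}y}{y\ln y}\ge(1-2\alpha)\ln\ln(t+\theta)$, which dominates $\ud h'(t)$ once $l_1$ is small; and inside the layer the window $\big[e^{\ln^{\alpha/2}(t+\theta)},e^{\ln^{\alpha}(t+\theta)}\big]$ yields $\frac{\alpha}{2}\ln\ln(t+\theta)\cdot e^{-\ln^{\alpha}(t+\theta)}$ for the convolution, which absorbs $\ud u_t\le\ud h'(t)e^{-\ln^{\alpha}(t+\theta)}$. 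With that modification the remainder of your outline goes through as in Lemma \ref{l2.4}.
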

\begin{proof}
Let $X_{\ep}$ and $k_0$ be defined as in the proof of Lemma \ref{l2.3} for any small $\ep>0$. Define
\[\underline{h}(t)=l_1(t+\theta)\ln\ln(t+\theta), ~ ~ \underline{u}(t,x)=u^*_{\ep}\bar{\zeta}(t,x),~ ~ \ud v(t,x)=v^*_{\ep}\bar{\zeta}(t,x),\]
where $l_1\ll1$ and $\theta\gg1$, $(u^*_{\ep},v^*_{\ep})$ is the same with that of Lemma \ref{l2.2}, and
\bess
&\alpha\in(0,\frac{1}{2}), ~ e^{\ln^{\alpha}\theta}>k_0, ~ l_1e^{\ln^{1-\alpha}\theta-1}\ge3\alpha, ~ l_1\theta\ln\ln\theta-3e^{\ln^{\alpha}\theta}>2k_0,\\
& \hat{\zeta}(t,x)=\min\kk\{1,\,\frac{\underline{h}(t)-x}
{e^{\ln^{\alpha}(t+\theta)}}\rr\} ~ ~ {\rm for ~ }t\ge0, ~ x\in[X_{\ep},\underline{h}(t)].
\eess

Now we are going to show that by choosing suitable $(l_1,\theta,T)$, \eqref{2.11} holds but with
\[x\in[k_0,\underline h(t))\setminus\left\{\underline{h}(t)-(t+\theta)^{\alpha}\right\} ~ {\rm replaced ~ by ~ }x\in[k_0,\underline h(t))\setminus\left\{\underline{h}(t)-e^{\ln^{\alpha}(t+\theta)}\right\}\]
in the first two inequalities of \eqref{2.11}.
Once we have done that, as in the proof of Lemma \ref{l2.2},  the desired results follow.

Now let us verify \eqref{2.11}. The identities in the fifth line of \eqref{2.11} is obvious.
Due to the above choices of $(l_1,\theta)$ and Proposition \ref{p2.1}, we have
\bes\label{2.14}
\int_{X_{\ep}}^{\underline{h}(t)}J_i(x-y)\bar{\zeta}(t,y)\dy\ge(1-\ep^2)\bar{\zeta}(t,x) ~ ~ {\rm for ~ }t\ge0,~ x\in[k_0,\underline{h}(t)], ~ i=1,2.
\ees
 Clearly,
 \bess
 \underline{u}(t,x)\ge u^*_{\ep}\frac{\underline{h}-x}{2e^{\ln^{\alpha}(t+\theta)}}\;\;\;{\rm for} \;\;x\in[\underline{h}(t)-2e^{\ln^{\alpha}(t+\theta)},\underline{h}(t)].
  \eess
Thus, for $x\in[\underline{h}(t)-e^{\ln^{\alpha}(t+\theta)},\underline{h}(t)]$, with $\theta$ large enough, we have
 \bes\label{2.15}
\int_{X_{\ep}}^{\underline h(t)}J_1(x-y)\underline u(t,y){\rm d}y&=&\int_{X_{\ep}-x}^{\underline h(t)-x}J_1(y)\underline u(t,x+y){\rm d}y\nonumber\\
 &\ge&\frac{ u^*_{\ep}}2\int_{-e^{\ln^{\alpha}(t+\theta)}}^{-e^{\ln^{\frac{\alpha}{2}}(t+\theta)}}J_1(y)
 \frac{\underline{h}(t)-x-y}{e^{\ln^{\alpha}(t+\theta)}}{\rm d}y\nonumber\\
&\ge&\frac{ u^*_{\ep}}2\int_{-e^{\ln^{\alpha}(t+\theta)}}^{-e^{\ln^{\frac{\alpha}{2}}(t+\theta)}}J_1(y)
\frac{-y}{e^{\ln^{\alpha}(t+\theta)}}{\rm d}y\nonumber\\
 &\ge&\frac{ u^*_{\ep}\varsigma_1}{2e^{\ln^{\alpha}(t+\theta)}}\int_{-e^{\ln^{\alpha}(t+\theta)}}^{-e^{\ln^{\frac{\alpha}{2}}(t+\theta)}}
 \frac{1}{|y|\ln|y|}{\rm d}y\nonumber\\
 &=&\frac{ u^*_{\ep}\varsigma_1}{2e^{\ln^{\alpha}(t+\theta)}}\int_{e^{\ln^{\frac{\alpha}{2}}(t+\theta)}}^{e^{\ln^{\alpha}(t+\theta)}}
 \frac{1}{y\ln y}{\rm d}y\nonumber\\
 &=&\frac{ u^*_{\ep}\varsigma_1\alpha\ln\ln(t+\theta)}{4e^{\ln^{\alpha}(t+\theta)}}.
\ees

By virtue of \eqref{2.14}, if $\ep$ is small enough, \eqref{2.8} also holds.
On the other hand, we have $\underline{u}_t=0$ for $t>0$ and $x\in[k_0,\underline{h}(t)-e^{\ln^{\alpha}(t+\theta)})$. Thanks to \eqref{2.8}, the first inequality of \eqref{2.11} holds for $x\in[k_0,\underline{h}(t)-e^{\ln^{\alpha}(t+\theta)})$. For $x\in(\underline{h}(t)-e^{\ln^{\alpha}(t+\theta)},\ud h(t))$, using \eqref{2.8} and \eqref{2.15}, we have
 \bess\underline{u}_t&\le&\frac{\ud h'(t)}{e^{\ln^{\alpha}(t+\theta)}}\\
 &=&\frac{l_1\ln\ln(t+\theta)+l_1\ln^{-1}(t+\theta)}{e^{\ln^{\alpha}(t+\theta)}}\\
 &\le&\frac{2l_1\ln\ln(t+\theta)}{e^{\ln^{\alpha}(t+\theta)}}\\
 &\le&\frac{ \tilde{r}\ep u^*_{\ep}\varsigma_1\alpha\ln\ln(t+\theta)}{8e^{\ln^{\alpha}(t+\theta)}}
 \eess
for $t>0$ and $x\in(\underline{h}(t)-e^{\ln^{\alpha}(t+\theta)},\, \underline{h}(t)]$ provided that $16l_1\le \tilde{r}\ep u^*_{\ep}\varsigma_1\alpha$.
 So the first inequality holds. Similarly, we can show the inequality for $\ud v$ is true if $l_1$, $\theta$ and $\ep$ are chosen as above.

We next show the inequality of $\ud h'(t)$. Direct calculations show that if $\theta$ is large enough, then
\bess
&&\int_{X_{\ep}}^{\underline h(t)}\int_{\underline h(t)}^{\infty}
\big(\mu_1J_1(x-y)\underline u(t,x)+\mu_2J_2(x-y)\ud v(t,x)\big){\rm d}y{\rm d}x\\
&\ge&\int_{\frac{\underline{h}(t)}2}^{\underline h(t)-e^{\ln^{\alpha}(t+\theta)}}\int_{\underline h(t)}^{\infty}
\big(\mu_1u^*_{\ep}J_1(x-y)+\mu_2v^*_{\ep}J_2(x-y)\big){\rm d}y{\rm d}x\\
&=&\left\{\int_{e^{\ln^{\alpha}(t+\theta)}}^{\frac{\underline{h}(t)}2}
\int_{e^{\ln^{\alpha}(t+\theta)}}^{y}+\int_{\frac{\underline{h}(t)}2}^{\infty}
\int_{e^{\ln^{\alpha}(t+\theta)}}^{\frac{\underline{h}(t)}2}\right\}
\big(\mu_1u^*_{\ep}J_1(y)+\mu_2v^*_{\ep}J_2(y)\big){\rm d}x{\rm d}y\\
&\ge&\int_{e^{\ln^{\alpha}(t+\theta)}}^{\frac{\underline{h}(t)}2}
\int_{(t+\theta)^{\alpha}}^{y}\big(\mu_1u^*_{\ep}J_1(y)+\mu_2v^*_{\ep}J_2(y)\big){\rm d}x{\rm d}y\\
&\ge&\int_{2e^{\ln^{\alpha}(t+\theta)}}^{\frac{\underline{h}(t)}2}\big(\mu_1u^*_{\ep}J_1(y)+\mu_2v^*_{\ep}J_2(y)\big)[y-e^{\ln^{\alpha}(t+\theta)}]{\rm d}y\\
&\ge&C_1\int_{2e^{\ln^{\alpha}(t+\theta)}}^{\frac{\underline{h}(t)}2}\frac{1}{y\ln y}\dy\\
&=&C_1\left\{\ln\left[\ln l_1+\ln(t+\theta)+\ln\ln\ln(t+\theta)-\ln2\right]-\ln\left[\ln 2+\ln^{\alpha}(t+\theta)\right]\right\}\\
&\ge&C_1(1-2\alpha)\ln\ln(t+\theta),
\eess
where $C_1$ depends only on $(a,b,H,G,J_1,J_2,\mu_1,\mu_2)$.
 Moreover, it is easy to see that
\[\underline{h}'(t)= l_1\ln\ln(t+\theta)+ \frac{l_1}{\ln(t+\theta)}\le 2 l_1\ln\ln(t+\theta)<C_1(1-2\alpha)\ln\ln(t+\theta)\]
provided that $2l_1<C_1(1-2\alpha)$.
Hence the inequality of $\ud h'(t)$, i.e., the one in the six line of \eqref{2.11} holds.

Hence, to prove \eqref{2.11}, it remains to show the inequalities in the last two lines of \eqref{2.11}, which can be done by using the analogous arguments as in the proof of Lemma \ref{l2.2}. The details are omitted here. Therefore, \eqref{2.11} holds. The proof is ended.
\end{proof}

 Clearly, Theorem \ref{t1.1} follows from Lemmas \ref{l2.2}-\ref{l2.5} and the fact that
 \[\limsup_{t\to\yy}(u(t,x),v(t,x))\le (U(x),V(x)) ~ ~ {\rm uniformly ~ in ~ }[0,\yy)\]
 which has been proved in \cite[Lemma 3.4]{LL2}.

\end{document}